\newtheorem{theorem}{Theorem}[section]
\theoremstyle{plain}
\newtheorem{claim}{Claim}[section]
\newtheorem{corollary}{Corollary}[section]
\newtheorem{definition}{Definition}[section]
\newtheorem{example}{Example}[section]
\newtheorem{lemma}{Lemma}[section]
\newtheorem{proposition}{Proposition}[section]
\numberwithin{equation}{section}
\begin{document}

\title{Numerical groups}
\author[m.t.Lozano]{Mar\'{\i}a Teresa Lozano}
\address[M.T. Lozano]{IUMA, Universidad de Zaragoza \\
Zaragoza, 50009, Spain}
\email[M.T. Lozano]{tlozano@unizar.es}
\thanks{Partially supported by grant MTM2016-76868-C2-2-P}
\author[J.M. Montesinos-Amilibia]{Jos\'{e} Mar\'{\i}a Montesinos-Amilibia}
\address[J.M. Montesinos]{Dto. Geometr\'{\i}a y Topolog\'{\i}a \\
Universidad Complutense, Madrid 28080 Spain}
\email[J.M. Montesinos]{jose{\_}montesinos@mat.ucm.es}
\date{\today }
\subjclass[2000]{Primary 11E57, 11H56, 20H20}
\keywords{numerical group, matrix group, trace of matrices}
\dedicatory{Dedicated to our friend Elena Mart\'{\i}n Peinador on the  occasion of her 70th birthday }
\begin{abstract}
A group of matrices $G$ with entries in a number field $K$ is defined to be \emph{numerical}
if $G$  has a finite  index subgroup of matrices whose
entries are algebraic integers. It is shown that an  irreducible or completely reducible subgroup of $GL(n,K)\subset GL(n,\mathbb{C})$  is
numerical if and only if the traces of its elements are algebraic integers. Some examples are given.
\end{abstract}
\maketitle

\section{Introduction}

Speaking loosely, two properties characterize an arithmetic group of
matrices with entries in a number field $K$. One is that it has a finite
index subgroup of matrices whose entries are algebraic integers (we call
such groups numerical). The second property is related to the number of
places of the field $K$ (see \cite{Vigneras1980}, \cite{HLM92} and \cite{EGM1991}) and
will not be considered here.

The purpose of this paper is to analyze the definition of numerical group and  characterize it in terms of
traces. This will be very useful when ascertaining if a given group of
is arithmetic or not.

The paper is organized as follows. In Section \ref{stf} we recall some known concepts on complex matrix subgroups of $GL(n,\mathbb{C})$. We prove for them that the condition of being irreducible or completely irreducible is characterized by the trace form (Theorem \ref{teorema10}).

In Section \ref{ng} we define the concept of numerical in a general framework, for subgroups of automorphisms of a vector space $V$ over $\mathbb{C}$. The main results are the following theorems

\noindent \textbf{Theorem  \ref{ifandonlyif}.}
\emph{Let $K$ be a number field and let $G$ be a irreducible or completely
reducible subgroup of $GL(n,K)\subset GL(n,\mathbb{C})$. Then $G$ is
numerical if and only if the trace of all the elements of $G$ are algebraic
integers. Moreover, $G$ is real numerical if $K$ is real.}

\noindent \textbf{Theorem  \ref{tcanume}.}
\emph{Let $G$ be a group of automorphisms of a vector space $V$ over $\mathbb{C}$
of dimension $n$. Then $G$ is a numerical group if and only if
there is a representation $\rho _{b}(G)$ such that $\rho _{b}(G)\subset GL(n,%
\mathcal{O}_{K}))$, where $\mathcal{O}_{K}$ is the ring of algebraic
integers in a number field $K$, which is real if $G$ is a real numerical
group.}

The proof of this Theorem is constructive and we will use this in a
forthcoming paper to revisit \cite{HLM92} and obtain the list of all the
arithmetic orbifols with the Borromean rings as the singular set in a new
way.

In the last section, Section \ref{sej}, we give some examples of numerical and no numerical subgroups in a family of subgroups of $SL(2,\mathbb{C})$ related to Hecke and Picard modular groups.

\section{The trace form on irreducible and completely irreducible subgroups of $GL(n,\mathbb{C})$}\label{stf}

Le $K$ be a field. Each element of the $K$-vector space $V=K^n$ is a $%
(n\times 1)$-matrix whose entries are in $K$.

Let $M(n,K)$ be the $K$-vector space of $(n\times n)$-matrices with entries
in $K$, and let $GL(n,K)$ be subset of invertible matrices (it has a group
structure under the matrix multiplication). There exits a left action of $%
M(n,K)$ on $V$ by matrix multiplication. These actions are the \emph{%
endomorphisms} of $V$. The actions of the elements of $GL(n,K)$ are the
\emph{automorphisms} of $V$.

Two matrices $A, B\in M(n,K) $ are \emph{similar} if there exists a matrix $%
U\in GL(n,K)$ such that $U^{-1}A U=B$. Because similar matrices represent
the same endomorphism with respect to (possibly) different bases, similar
matrices share all properties of the endomorphism that they represent, for
instance, the invariants derived from the characteristic polynomial, as
determinant and trace. Two groups $G, H\in GL(n,K) $ are \emph{equivalent}
if there exits a matrix $U\in GL(n,K)$ such that $U^{-1}G U=H$. We can also
say that two groups of endomorphism of $V$ are \emph{equivalent} if they are
represented by equivalent subgroups in $GL(n,K) $. (They are related by a
change of basis and therefore they share all algebraic properties as linear
groups.) The equivalence is used to work in a more convenient framework.

Given a subset $S\subset M(n,K)$, a subspace $W \subset V$ is call \emph{$S$%
- invariant space} if $A(W)\subset W$ for all $A\in S$. A $S$-invariant
space $W$ is \emph{minimal} if $\{ 0\}$ and $W$ are the only subspaces of $W$
which are $S$-invariant.

A subgroup $G$ of $GL(n,K)$ is called \emph{irreducible} if $V$ and $\{0\}$
are the only minimal $G$-invariant subspaces. A subgroup $G$ of $GL(n,K)$ is
called \emph{completely reducible} if there is a direct sum decomposition $%
V=\oplus _{i=1}^{r} V_i$ such that the restriction $G|_{V_i}$ is
irreducible, for each $i\in \{1,...,r\}$. In this case each $V_i$ is a
minimal $G$-invariant subspace.

Let $G$ be a subgroup of $GL(n,K)$, $W\subset V$ a minimal $G$-invariant
space of dimension $w$ and $b$ is a basis of $W$, the action of $G$ on $b$
defines a homomorphism
\begin{equation*}
\rho : G \longrightarrow \rho (G) \subset GL(w,K).
\end{equation*}
This map $\rho$ is an \emph{irreducible representation of $G$ of degree $w$}%
. Two representations $\rho$ and $\rho ^{\prime }$ are equivalent if there
exits a matrix $U\in GL(w,K)$ such that $U^{-1}\rho (A) U=\rho ^{\prime }(A)$
for each $A\in G$, both representations are related by a change of basis.

Let $\mathbb{C}$ denote the complex field. Let $G$ be a completely reducible
subgroup of $GL(n,\mathbb{C})$. Then $\mathbb{C}^n$ is a direct sum of
minimal $G$-invariant spaces. These subspaces can be ordered such that the
actions of $G$ on the first $k_1$ are equivalent; the actions of $G$ on the
next $k_2$ are also equivalent, and so on. There exists a matrix $U\in GL(n,%
\mathbb{C})$ such that for each $A\in G$ the matrix $U^{-1}AU$ is like this
\begin{equation*}
Diagonal[\overset{k_1}{\overbrace{A_1,...A_1}};...;\overset{k_1}{\overbrace{%
A_i,...A_i}};...;\overset{k_1}{\overbrace{A_s,...A_s}}]
\end{equation*}
where $\rho_i:A\longrightarrow A_i$ is an irreducible representation of
degree, say, $n_i$; $n= n_1k_1+...+n_sk_s$; $k_i$ is the multiplicity of $%
\rho _i$ and for $i\neq j$ the representations $\rho _i$ and $\rho _j$ are
nonequivalent. In this way we obtain a new basis in $\mathbb{C}^n$ and a
group $U^{-1}GU$ equivalent to $G$.

The \emph{linear hull} $\mathbb{C}[G]$ of $G$ is defined as the set of all
linear combinations $\left( \sum_i \lambda_i g_i \right)$, $\lambda_i \in
\mathbb{C}$, $g_i \in G$. It is the vector subspace on $M(n,\mathbb{C})$
generated by the matrices of $G$. $\mathbb{C}[G]$ is also a ring and an
associative algebra over $\mathbb{C}$ with unity $I$, where the product is
the multiplication of matrices
\begin{equation*}
\left( \sum_i \lambda_i g_i \right) \left( \sum_j \lambda_j g_j \right)=
\sum_{i,j} \lambda_i \lambda_j g_i g_j
\end{equation*}

The following theorem is a generalization of the Artin-Wedderburn Theorem
(See \cite[Section 14.4]{Ruso}).

\begin{theorem}
\label{AWT}  Let $G\subset GL(n,\mathbb{C})$ be a completely reducible
group. There exist integer positive numbers $n_{1},..,n_{s},k_{1},...,k_{s}$
and a matrix $U \subset GL(n,C)$ such that $U^{-1}\mathbb{C}[G]U$ consists
of all matrices of the following form
\begin{equation*}
Diagonal[A_{1},...,A_{1};...;A_{j},...,A_{j};...;A_{s},...,A_{s}]
\end{equation*}
where the multiplicity of $A_{i}$ is $k_{i}$ and
\begin{equation*}
A_{1},...,A_{s}
\end{equation*}
take independently all possible values in
\begin{equation*}
M(n_{1},\mathbb{C}),...,M(n_{s},\mathbb{C})
\end{equation*}
and $n=n_{1}k_{1}+...+n_{s}k_{s}$. In particular, if $G$ is irreducible then
$\mathbb{C}[G]=M(n,\mathbb{C})$. $_{\blacksquare}$
\end{theorem}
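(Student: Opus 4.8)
The plan is to identify $\mathbb{C}[G]$ with the bicommutant of $G$ acting on $V=\mathbb{C}^{n}$ and to compute the two commutants explicitly, via Schur's lemma and the Jacobson density theorem; the special case $s=1$, $k_{1}=1$ will then be exactly Burnside's theorem. First I would put $A:=\mathbb{C}[G]$ and regard $V=\mathbb{C}^{n}$ as a left $A$-module. A $G$-invariant subspace is the same thing as an $A$-submodule, so the hypothesis that $G$ is completely reducible says precisely that $V$ is a semisimple $A$-module. Collecting the simple summands into isomorphism classes, exactly as in the paragraph preceding the statement, one may change basis (i.e.\ replace $A$ by $U^{-1}AU$) so that $V\cong\bigoplus_{i=1}^{s}W_{i}^{\oplus k_{i}}$, where $W_{1},\dots,W_{s}$ are pairwise non-isomorphic simple $A$-modules with $\dim_{\mathbb{C}}W_{i}=n_{i}$ and $n=\sum_{i}n_{i}k_{i}$, and every element of $A$ acts in the block-diagonal shape displayed in the theorem. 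Note also that $V$ is a \emph{faithful} $A$-module, since a matrix annihilating $\mathbb{C}^{n}$ is $0$; hence the inclusion ``$\subseteq$'' in the theorem is automatic and only the reverse inclusion has content.

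Next I would compute the commutant $D:=\operatorname{End}_{A}(V)$. Schur's lemma gives $\operatorname{Hom}_{A}(W_{i},W_{j})=0$ for $i\neq j$ — this is where the pairwise non-equivalence of the $\rho_{i}$ enters — and, since $\mathbb{C}$ is algebraically closed, $\operatorname{End}_{A}(W_{i})=\mathbb{C}$. Therefore an $A$-endomorphism of $V$ is block diagonal, with the block on $W_{i}^{\oplus k_{i}}=W_{i}\otimes_{\mathbb{C}}\mathbb{C}^{k_{i}}$ equal to $\operatorname{id}_{W_{i}}$ tensored with an arbitrary endomorphism of the multiplicity space; that is, $D\cong\prod_{i=1}^{s}M(k_{i},\mathbb{C})$, acting on $V$ only in the multiplicity directions.

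Finally I would invoke the Jacobson density theorem: since $V$ is a semisimple $A$-module, $A$ is dense in $\operatorname{End}_{D}(V)$, and because $V$ is finite-dimensional over $\mathbb{C}$ this forces the equality $A=\operatorname{End}_{D}(V)$. A direct computation — the $D$-endomorphisms of $W_{i}\otimes\mathbb{C}^{k_{i}}$ commuting with $\operatorname{id}_{W_{i}}\otimes M(k_{i},\mathbb{C})$ are exactly $M(n_{i},\mathbb{C})\otimes\operatorname{id}_{\mathbb{C}^{k_{i}}}$ — shows $\operatorname{End}_{D}(V)$ is precisely the algebra of block matrices $\operatorname{Diagonal}[A_{1},\dots,A_{1};\dots;A_{s},\dots,A_{s}]$ with $A_{i}$ ranging freely over $M(n_{i},\mathbb{C})$ and repeated with multiplicity $k_{i}$. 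Hence $U^{-1}\mathbb{C}[G]U$ equals this algebra, and for $G$ irreducible ($s=1$, $k_{1}=1$) this reads $\mathbb{C}[G]=M(n,\mathbb{C})$.

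The main obstacle is the surjectivity hidden in the last step: that $A$ reaches \emph{all} of $\operatorname{End}_{D}(V)$, equivalently that the natural map $A\to\prod_{i}M(n_{i},\mathbb{C})$ is onto and that the blocks $A_{1},\dots,A_{s}$ may be prescribed independently. If one prefers not to quote the density theorem in this form, the alternative is to prove the irreducible case by hand (a subalgebra of $M(m,\mathbb{C})$ acting without proper invariant subspaces must equal $M(m,\mathbb{C})$) and then glue the blocks by a Chinese-remainder/Goursat argument, again using that the $W_{i}$ are pairwise non-isomorphic; the remaining ingredients — the inclusion ``$\subseteq$'', the block shape, and the count $n=\sum_{i}n_{i}k_{i}$ — are formal and were already recorded in the discussion before the theorem.
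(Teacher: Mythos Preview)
The paper does not actually prove this theorem: the trailing $_{\blacksquare}$ here marks a result quoted without proof, and the sentence introducing it points to \cite[Section 14.4]{Ruso} (Suprunenko, \emph{Matrix Groups}) as the source. So there is no in-paper argument to compare your proposal against.

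That said, your argument is correct and is one of the standard proofs. Identifying $G$-invariant subspaces with $\mathbb{C}[G]$-submodules is exactly the paper's Lemma~\ref{lema7}; the isotypic decomposition and block shape are precisely what the paragraph preceding the theorem sets up; Schur over $\mathbb{C}$ gives $\operatorname{End}_{A}(V)\cong\prod_{i}M(k_{i},\mathbb{C})$; and Jacobson density (or, in the irreducible case, Burnside) yields the surjectivity onto $\prod_{i}M(n_{i},\mathbb{C})$, with faithfulness of $V$ as an $A$-module giving injectivity. Your remark that the only nontrivial point is this surjectivity, and that it can alternatively be obtained by doing Burnside blockwise and then a Chinese-remainder/Goursat splice using pairwise non-isomorphism of the $W_{i}$, is accurate. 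The reference the paper cites proceeds in essentially the Artin--Wedderburn spirit (structure of semisimple algebras over an algebraically closed field), which is equivalent to what you do; your double-commutant packaging is a clean way to phrase it.
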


\begin{definition}
The \emph{trace form} $T$ is the bilineal symmetric form defined by
\begin{equation*}
\begin{array}{ccc}
T:M(n,\mathbb{C})\times M(n,\mathbb{C}) & \longrightarrow & \mathbb{C} \\
(A,B) & \to & T(A,B)=tr(AB)%
\end{array}
\end{equation*}
where $tr(A)$ denotes the trace of the matrix $A$. If $G\subset GL(n,\mathbb{%
C})$ is a group, the restriction of the trace form to $\mathbb{C}[G]$ is
denoted by $T_G$.
\end{definition}

The main result in this paragraph (Theorem \ref{teorema10}) is the
characterization of an irreducible or completely reducible group $G\subset
GL(n,\mathbb{C})$ by means of the trace form $T_G$.

\begin{claim}
The trace $T$ is a nondegenerate symmetric bilinear form on $M(n,\mathbb{C})$%
, that is, if $tr(AB)=0$ for all $B\in M(n,\mathbb{C})$ then $A=0$.
\end{claim}

\begin{proof}
Let $A=(a_{ij})$ be a matrix in $M(n,\mathbb{C})$ such that $tr(AB)=0$ for
all $B\in M(n,\mathbb{C})$. Denote by $E_{ij}$ the matrix with all entries $0
$ but the entry in position $ij$ which is $1$. The $n^{2}$ matrices $E_{ij}$
form a basis of the vector space $M(n,\mathbb{C})$. Therefore $%
tr(AE_{ij})=a_{ij}=0$ for all $i,j$, this implies $A=0$.
\end{proof}

An analogous demonstration will be used to prove the following result, which
is the first part of the looked for characterization.

\begin{theorem}
\label{teorema4}  Let $G$ be an irreducible or completely reducible subgroup
of $GL(n,\mathbb{C})$. Then $T_G$ is a nondegenerate symmetric bilinear form.
\end{theorem}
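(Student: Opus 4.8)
The plan is to leverage the Artin--Wedderburn description (Theorem~\ref{AWT}) to reduce the statement to a computation about the full matrix algebras $M(n_i,\mathbb{C})$, on each of which the trace form is manifestly nondegenerate. Concretely, after conjugating by the matrix $U$ of Theorem~\ref{AWT}, an element of $\mathbb{C}[G]$ has the block form $D = \mathrm{Diagonal}[A_1,\dots,A_1;\dots;A_s,\dots,A_s]$ with $A_i$ ranging freely over $M(n_i,\mathbb{C})$ and $A_i$ appearing with multiplicity $k_i$. Since conjugation preserves traces, $T_G$ is nondegenerate on $\mathbb{C}[G]$ if and only if it is nondegenerate on $U^{-1}\mathbb{C}[G]U$, so we may work with these block matrices throughout.

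The key computation: if $D = \mathrm{Diagonal}[\dots;A_i,\dots,A_i;\dots]$ and $D' = \mathrm{Diagonal}[\dots;B_i,\dots,B_i;\dots]$ are two such elements, then $DD'$ is block diagonal with blocks $A_iB_i$ each repeated $k_i$ times, whence
\begin{equation*}
T_G(D,D') = \mathrm{tr}(DD') = \sum_{i=1}^{s} k_i\,\mathrm{tr}(A_iB_i) = \sum_{i=1}^{s} k_i\, T_{M(n_i,\mathbb{C})}(A_i,B_i).
\end{equation*}
Thus $T_G$ is, up to the positive integer weights $k_i$, the orthogonal direct sum of the trace forms on the $M(n_i,\mathbb{C})$. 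Now suppose $D$ is in the radical of $T_G$, i.e. $T_G(D,D')=0$ for all $D'$. Fixing an index $i$ and letting $B_j = 0$ for $j\neq i$ while $B_i$ ranges over all of $M(n_i,\mathbb{C})$ (which is allowed, since the blocks vary independently), we get $k_i\,\mathrm{tr}(A_iB_i)=0$ for every $B_i \in M(n_i,\mathbb{C})$; as $k_i\geq 1$, the nondegeneracy of the trace form on $M(n_i,\mathbb{C})$ — proved exactly as in the Claim above, testing against the elementary matrices $E_{pq}$ — forces $A_i=0$. Since this holds for every $i$, we conclude $D=0$, establishing nondegeneracy. Symmetry is immediate from $\mathrm{tr}(AB)=\mathrm{tr}(BA)$, and bilinearity is inherited from $T$.

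I do not anticipate a serious obstacle here; the content is entirely bookkeeping with the block decomposition, and the only subtlety is making sure one invokes the \emph{independence} of the blocks $A_1,\dots,A_s$ (the force of Theorem~\ref{AWT}) when choosing the test matrix $D'$ — without that independence one could not isolate a single block. For the irreducible case this is just the special case $s=1$, $n_1=n$, $k_1=1$, where $\mathbb{C}[G]=M(n,\mathbb{C})$ and the statement reduces directly to the Claim already proved.
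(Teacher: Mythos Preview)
Your proof is correct and follows essentially the same route as the paper: both invoke Theorem~\ref{AWT} to pass to the block-diagonal algebra $Q=U^{-1}\mathbb{C}[G]U$ and then isolate each block by testing against elements supported in a single slot (the paper's $F^{t}_{ij}$ are exactly your choice $B_j=0$ for $j\neq t$, $B_t=E_{ij}$). Your write-up is in fact slightly more careful, making explicit both the conjugation-invariance of the trace and the multiplicity factor $k_i$ in $\mathrm{tr}(DD')=\sum_i k_i\,\mathrm{tr}(A_iB_i)$.
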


\begin{proof}
The vector space $Q$ of matrices
\begin{equation*}
Diagonal[A_{1},...,A_{1};...;A_{j},...,A_{j};...;A_{s},...,A_{s}]
\end{equation*}
considered in Theorem \ref{AWT} has dimension $n_1^2+...+n_s^2$.

We consider a new basis for $Q$ whose $n_1^2$ first vectors are
\begin{equation*}
F_{ij}^{1}=Diagonal[E_{ij},...,E_{ij};0,...,0;...;0,...,0]
\end{equation*}
where $i,j\in \{1,...,n_1\}$ . The next $n_{2}^{2}$ vectors in the new basis
of $Q$ are
\begin{equation*}
F_{ij}^{2}=Diagonal[0,...,0;E_{ij},...,E_{ij};0,...,0;...;0,...,0]
\end{equation*}
etc. Assume
\begin{equation*}
A=Diagonal[A_{1},...,A_{1};...;A_{j},...,A_{j};...;A_{s},...,A_{s}]
\end{equation*}
is such that $tr(AB)=0$ for all $B\in Q$. We denote $A_t=(a_{t,ij})$. Since $%
tr(AF_{ij}^t)=a_{t,ij}=0$, one obtains that $A=0$.
\end{proof}

The following example shows that there are groups for which the trace is a
degenerate form.

\begin{example}
Let $G \subset GL(n,\emph{C})$ be the group 
\begin{equation*}
G= \left\{ \left(
\begin{array}{cc}
1 & n \\
0 & 1
\end{array}
\right)\left| n\in \mathbb{Z} \right.\right\}
\end{equation*}
Then $\mathbb{C}[G]$ is the following dimension 2 algebra over $\mathbb{C}$
\begin{equation*}
\mathbb{C}[G]= \left\{ \left(
\begin{array}{cc}
x & y \\
0 & x
\end{array}
\right) \, | \, (x,y)\in \mathbb{C}^2 \right\}
\end{equation*}
and $T_G$ is degenerate because $T_G(X,A)=0$ for all $A\in \mathbb{C}[G]$
and
\begin{equation*}
X=\left(
\begin{array}{cc}
0 & 1 \\
0 & 0
\end{array}
\right).
\end{equation*}
\end{example}

To prove the second part of the characterization, that is the converse to
Theorem \ref{teorema4}, we need some results concerning the associative
algebras over a field. For the benefit of the reader we remember here some
necessary concepts. (See \cite[Chapter X]{Wedderburn}). Let $\mathcal{A}$ be
a finitely generated associative algebra over a field $K$. A subalgebra $%
\mathcal{B}\subset \mathcal{A}$ is an \emph{invariant subalgebra} if $%
\mathcal{AB}\leq \mathcal{B}$ and $\mathcal{BA}\leq \mathcal{B}$. When the
algebra has no unity, could be that $\mathcal{A}^2 < \mathcal{A}$, $\mathcal{%
A}^3< \mathcal{A}^2$, etc. Because $\mathcal{A}$ has finite dimension, for
some $n$
\begin{equation*}
\mathcal{A}^n< \mathcal{A}^{n-1}, \quad \mathcal{A}^{n+1} =\mathcal{A}^n
\end{equation*}
The integer $n$ is the \emph{index} of $\mathcal{A}$. If $\mathcal{A}^n=0$
the algebra is called \emph{nilpotent}. The maximal nilpotent invariant
subalgebra of $\mathcal{A}$ is the \emph{radical} $R(\mathcal{A})$ de $%
\mathcal{A}$. An algebra $\mathcal{A}$ which is not nilpotent and whose
radical $R(\mathcal{A})$ is $\{ 0\}$ is called \emph{semisimple}. If in
addition it has no invariant subalgebras it is said to be \emph{simple}.

The above concepts were generalizad to more general algebras and rings using
the \emph{Jacobson radical} of a ring $\mathcal{A}$, which is the ideal
whose elements annihilate all simple right $\mathcal{A}$-modules, but we do
not need here this further generalization.

The classification theorem of associative algebras over a field, \cite[10.10
Therem 5]{Wedderburn} contains the following characterization of semisimple
algebras.

\begin{theorem}
An algebra is semisimple if and only if can be expressed uniquely as the
direct sum of simple algebras. $_{\blacksquare}$
\end{theorem}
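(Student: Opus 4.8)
The plan is to prove the two implications separately: the reverse one is essentially formal, while the forward one is the genuine structure theorem, which I would establish by induction on $\dim_K\mathcal{A}$, splitting off one minimal two-sided ideal at a time. For the ``easy'' direction, suppose $\mathcal{A}=\mathcal{S}_1\oplus\cdots\oplus\mathcal{S}_r$ with each $\mathcal{S}_i$ simple. Each $\mathcal{S}_i$, being simple, is semisimple and hence carries its own identity element $e_i$; the $e_i$ are orthogonal idempotents of $\mathcal{A}$, each central in $\mathcal{A}$ because $\mathcal{S}_i$ is a two-sided ideal, so $ae_i-e_ia\in\mathcal{S}_i$ is killed on both sides by $e_i$, giving $ae_i=e_iae_i=e_ia$. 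If $\mathcal{N}$ is a nilpotent two-sided ideal of $\mathcal{A}$, then $e_i\mathcal{N}=\mathcal{N}\cap\mathcal{S}_i$ is a nilpotent two-sided ideal of the simple algebra $\mathcal{S}_i$, hence $0$; summing, $\mathcal{N}=0$, so $R(\mathcal{A})=0$. Also $\mathcal{A}^m\supseteq\mathcal{S}_1^m=\mathcal{S}_1\neq0$ for all $m$, so $\mathcal{A}$ is not nilpotent; thus $\mathcal{A}$ is semisimple.

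For the forward direction, let $\mathcal{A}$ be semisimple. The first step is the lemma that in a finite-dimensional algebra with $R(\mathcal{A})=0$, every nonzero two-sided ideal $\mathcal{B}$ satisfies $\mathcal{B}^2=\mathcal{B}$: indeed $\mathcal{B}^2$ is again a two-sided ideal, and choosing $\mathcal{B}$ minimal among nonzero two-sided ideals one gets either $\mathcal{B}^2=\mathcal{B}$ or $\mathcal{B}^2=0$, the latter making $\mathcal{B}$ a nonzero nilpotent ideal and contradicting $R(\mathcal{A})=0$. So a minimal nonzero two-sided ideal $\mathcal{B}$ has $\mathcal{B}^2=\mathcal{B}\neq0$, hence is not nilpotent and therefore contains a nonzero idempotent (the classical fact that a non-nilpotent finite-dimensional algebra over a field contains an idempotent; see \cite[Chapter X]{Wedderburn}). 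A Peirce-decomposition refinement turns this into an idempotent $e\in\mathcal{B}$ that is an identity for $\mathcal{B}$, and then the two-sided-ideal property forces $e$ to be central in $\mathcal{A}$, since for $a\in\mathcal{A}$ one has $ea\in\mathcal{B}$, whence $ea=(ea)e=eae$, and likewise $ae=eae$, so $ea=ae$. Consequently $\mathcal{B}=e\mathcal{A}=\mathcal{A}e$, the set $\mathcal{B}'=\{a-ea:a\in\mathcal{A}\}=\{a\in\mathcal{A}:ea=0\}$ is a two-sided ideal, and $\mathcal{A}=\mathcal{B}\oplus\mathcal{B}'$.

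It remains to check that $\mathcal{B}$ is simple and $\mathcal{B}'$ is again semisimple of strictly smaller dimension, so that the induction applies. Any two-sided ideal $\mathcal{I}$ of the algebra $\mathcal{B}$ is automatically a two-sided ideal of $\mathcal{A}$, because $\mathcal{A}\mathcal{I}=(e\mathcal{A})\mathcal{I}+\mathcal{B}'\mathcal{I}\subseteq\mathcal{B}\mathcal{I}+0\subseteq\mathcal{I}$ and symmetrically on the right (using $ei=i$ for $i\in\mathcal{I}\subseteq\mathcal{B}$ and the centrality of $e$); minimality of $\mathcal{B}$ then gives $\mathcal{I}\in\{0,\mathcal{B}\}$, and $\mathcal{B}^2=\mathcal{B}$ shows $\mathcal{B}$ is not nilpotent, so $\mathcal{B}$ is simple. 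A nilpotent two-sided ideal of $\mathcal{B}'$ is, for the same reason, a nilpotent two-sided ideal of $\mathcal{A}$, hence $0$; so $\mathcal{B}'$ has zero radical, and if $\mathcal{B}'\neq0$ it is not nilpotent, i.e. semisimple. By the induction hypothesis $\mathcal{B}'=\mathcal{S}_2\oplus\cdots\oplus\mathcal{S}_r$ with each $\mathcal{S}_j$ a simple two-sided ideal of $\mathcal{B}'$, hence of $\mathcal{A}$; setting $\mathcal{S}_1=\mathcal{B}$ yields $\mathcal{A}=\mathcal{S}_1\oplus\cdots\oplus\mathcal{S}_r$.

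Finally, uniqueness: if $\mathcal{A}=\bigoplus_i\mathcal{S}_i=\bigoplus_j\mathcal{S}'_j$ are two decompositions into simple two-sided ideals, then for each $i$ one has $\mathcal{S}_i=\mathcal{S}_i\mathcal{A}=\bigoplus_j\mathcal{S}_i\mathcal{S}'_j$, where $\mathcal{S}_i\mathcal{S}'_j\subseteq\mathcal{S}_i\cap\mathcal{S}'_j$ is a two-sided ideal of both simple algebras $\mathcal{S}_i$ and $\mathcal{S}'_j$, hence is $0$ or equals each of them; since $\mathcal{S}_i\neq0$, exactly one index $j$ occurs and $\mathcal{S}_i=\mathcal{S}'_j$, so the two families coincide up to order (equivalently, the simple summands are precisely the minimal nonzero two-sided ideals of $\mathcal{A}$, an intrinsic list). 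The step I expect to be the real obstacle is the production of the central idempotent inside a minimal two-sided ideal: extracting a nonzero idempotent from a non-nilpotent finite-dimensional algebra and then promoting it by Peirce decomposition to an identity for $\mathcal{B}$ is exactly where the hypothesis ``not nilpotent'' does its work, everything else being bookkeeping with two-sided ideals. One must also keep in mind that $\mathcal{A}$ need not a priori possess an identity — the central idempotents assemble into one only once the decomposition is complete.
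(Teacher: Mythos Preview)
The paper does not prove this theorem at all: it is quoted as the classification theorem of associative algebras from Wedderburn's text \cite[10.10 Theorem 5]{Wedderburn}, and the terminal $_\blacksquare$ signals precisely that the authors take it as known. So there is no proof in the paper to compare against; what you have written is a self-contained proof of the cited result.

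Your argument is the standard classical one and is correct in outline: pick a minimal nonzero two-sided ideal $\mathcal{B}$, show $\mathcal{B}^2=\mathcal{B}$ (else $\mathcal{B}$ is nilpotent, contradicting $R(\mathcal{A})=0$), extract an idempotent, promote it via Peirce decomposition to an identity for $\mathcal{B}$, observe it is central, split $\mathcal{A}=\mathcal{B}\oplus\mathcal{B}'$, and induct. The uniqueness paragraph is also the standard one. This is essentially the route taken in Wedderburn's own exposition, so in spirit you are reproducing the source the paper cites rather than offering an alternative.

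One small circularity to tidy: in the reverse implication you assert that each simple summand $\mathcal{S}_i$ ``being simple, is semisimple and hence carries its own identity element $e_i$''. With the paper's definitions, simple indeed implies semisimple, but the existence of an identity in a semisimple (or simple) algebra is not part of the definition---it is exactly what your forward-direction Peirce argument establishes. Either invoke that argument first, or rephrase the easy direction without the $e_i$: it suffices to note that any nilpotent two-sided ideal $\mathcal{N}$ of $\mathcal{A}$ has $\mathcal{N}\cap\mathcal{S}_i$ a nilpotent two-sided ideal of the simple algebra $\mathcal{S}_i$, hence zero (a simple algebra has no nonzero proper invariant subalgebra and is not nilpotent), and that $\mathcal{A}$ is not nilpotent because $\mathcal{S}_1$ is not.
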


The following lemma relates minimal invariants subspaces for a group $%
G\subset GL(n,\mathbb{C})$ and for the corresponding algebra $\mathbb{C}%
[G]\subset Hom(V,V)\cong M(n,\mathbb{C})$.

\begin{lemma}
\label{lema7}  Let $G$ be a subgroup of $GL(n,\mathbb{C})$. A vector
subspace $W$ de $V=\mathbb{C}^n$ is a minimal $G$-invariant space if and
only if $W$ is a minimal $\mathbb{C}[G]$-invariant space.
\end{lemma}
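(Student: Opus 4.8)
The statement says: for $G \subset GL(n,\mathbb{C})$, a subspace $W \subset V = \mathbb{C}^n$ is a minimal $G$-invariant subspace if and only if it is a minimal $\mathbb{C}[G]$-invariant subspace. The natural strategy is to show first that the $G$-invariant subspaces and the $\mathbb{C}[G]$-invariant subspaces coincide as a family, and then observe that "minimal" is a purely lattice-theoretic notion that transfers automatically once the two families are the same.

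The plan is as follows. First I would prove the elementary claim that a subspace $W \subset V$ is $G$-invariant if and only if it is $\mathbb{C}[G]$-invariant. The direction "$\mathbb{C}[G]$-invariant $\Rightarrow$ $G$-invariant" is immediate, since $G \subset \mathbb{C}[G]$ (each $g \in G$ is the trivial linear combination $1 \cdot g$). For the converse, suppose $g(W) \subset W$ for all $g \in G$. Any element of $\mathbb{C}[G]$ has the form $x = \sum_i \lambda_i g_i$ with $\lambda_i \in \mathbb{C}$ and $g_i \in G$; then for $w \in W$ we have $x(w) = \sum_i \lambda_i\, g_i(w)$, and each $g_i(w) \in W$ because $W$ is $G$-invariant, so $x(w)$ is a linear combination of vectors of $W$, hence lies in $W$ since $W$ is a subspace. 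Thus $x(W) \subset W$, and $W$ is $\mathbb{C}[G]$-invariant.

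Having established that the set of $G$-invariant subspaces of $V$ equals the set of $\mathbb{C}[G]$-invariant subspaces of $V$, the lemma follows formally. A subspace $W$ is a minimal $G$-invariant space precisely when $W$ itself is $G$-invariant and the only $G$-invariant subspaces contained in $W$ are $\{0\}$ and $W$; the analogous description holds for $\mathbb{C}[G]$. Since subspace containment does not depend on which of the two (equal) families we use, and the subspaces of $W$ that are $G$-invariant are exactly those that are $\mathbb{C}[G]$-invariant, minimality with respect to $G$ is equivalent to minimality with respect to $\mathbb{C}[G]$. This completes the proof.

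I do not anticipate a serious obstacle here; the only point requiring a small amount of care is to make sure the definition of "minimal $S$-invariant space" given in the text — namely that $\{0\}$ and $W$ are the only $S$-invariant subspaces of $W$ — is applied with $S = G$ and $S = \mathbb{C}[G]$ to the same ambient $W$, so that the equivalence of the two invariance conditions can be invoked both for $W$ and for every subspace of $W$. One might also remark that the same argument shows $\mathbb{C}[G]$ and $G$ have the same invariant subspaces in any $\mathbb{C}[G]$-module, not just in $V$, but for the present purpose the case of $V = \mathbb{C}^n$ is all that is needed.
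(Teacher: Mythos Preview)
Your proposal is correct and follows essentially the same approach as the paper: first show that $G$-invariant subspaces and $\mathbb{C}[G]$-invariant subspaces coincide (using $G\subset\mathbb{C}[G]$ for one direction and the linearity argument $x(w)=\sum_i\lambda_i g_i(w)\in W$ for the other), and then conclude that minimality transfers. The paper's proof is slightly terser on the minimality step, but the argument is the same.
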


\begin{proof}
Suppose that the subspace $W$ of $V$ is  $G$-invariant. Consider elements $%
x=\sum_i \lambda_i g_i \in \mathbb{C}[G]$ and $v\in W$. Then , since $W$ is
a vector space, we have
\begin{equation*}
x(v)= \left( \sum_i \lambda_i g_i\right) (v)= \sum_i \lambda_i g_i(v) \in W.
\end{equation*}
Reciprocally, if $x(v)\in W$ for every $x\in \mathbb{C}[G]$ and for every $%
v\in W$, because $G\subset \mathbb{C}[G]$, $W$ is $G$-invariante.

Therefore, it follows that a $G$-invariant subspace $W$ is minimal if and
only if it is minimal as $\mathbb{C}[G]$-invariant space.
\end{proof}

Then, the group $G\subset GL(n,\mathbb{C})$ is irreducible (resp. completely
reducible) if and only if $\mathbb{C}[G]$ is irreducible (resp. completely
reducible) as a vector subspace of $Hom(V,V)\cong M(n,\mathbb{C})$.

As a consequence of the above result (Lemma \ref{lema7}) and Theorem 3 in
\cite[p. 97]{Ruso}, we have

\begin{theorem}
A group $G \subset GL(n,\mathbb{C})$ is irreducible (completely reducible)
if and only if the algebra $\mathbb{C}[G]$ is simple (semisimple). $%
_{\blacksquare}$
\end{theorem}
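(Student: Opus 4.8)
The plan is to deduce the statement from Lemma~\ref{lema7} combined with the structure theory of finite-dimensional associative algebras over $\mathbb{C}$ recalled above (equivalently, Theorem~3 of \cite[p.~97]{Ruso}), and that is the route I would take. The first step is a translation into module language: by Lemma~\ref{lema7} a subspace $W\subseteq V=\mathbb{C}^n$ is a minimal $G$-invariant subspace if and only if it is a minimal $\mathbb{C}[G]$-invariant subspace, so $V$ is a simple $\mathbb{C}[G]$-module exactly when $G$ is irreducible, and $V$ is a direct sum of simple $\mathbb{C}[G]$-submodules exactly when $G$ is completely reducible. Two elementary observations are used throughout: the $\mathbb{C}[G]$-module $V$ is faithful, since $\mathbb{C}[G]$ is by construction a subalgebra of $M(n,\mathbb{C})\cong Hom(V,V)$; and $\mathbb{C}[G]$ is unital, as $I\in G\subseteq\mathbb{C}[G]$, hence is not nilpotent, so the words ``semisimple'' and ``simple'' apply to it in the sense of \cite{Wedderburn}.

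For ``$G$ completely reducible $\iff$ $\mathbb{C}[G]$ semisimple'': if $G$ is completely reducible, write $V=\bigoplus_i V_i$ with each $V_i$ simple over $\mathbb{C}[G]$; the radical $R:=R(\mathbb{C}[G])$ is a nilpotent two-sided ideal, so $RV_i\subsetneq V_i$ (otherwise $R^kV_i=V_i$ for every $k$, contradicting $R^k=0$ for $k$ large), hence $RV_i=0$ by simplicity of $V_i$, hence $RV=\sum_iRV_i=0$, and faithfulness forces $R=0$; as $\mathbb{C}[G]$ is also not nilpotent, it is semisimple. Conversely, if $\mathbb{C}[G]$ is semisimple then, by the classification recalled above, it is a direct sum of simple algebras, each a full matrix algebra over $\mathbb{C}$, and every module over such an algebra is a direct sum of simple modules; in particular $V$ is, so by Lemma~\ref{lema7} $V$ splits as a direct sum of minimal $G$-invariant subspaces and $G$ is completely reducible.

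For ``$G$ irreducible $\iff$ $\mathbb{C}[G]$ simple'': if $G$ is irreducible then $V$ is a simple $\mathbb{C}[G]$-module, so the last assertion of Theorem~\ref{AWT} (Burnside's theorem, which uses that $\mathbb{C}$ is algebraically closed and $\dim_{\mathbb{C}}V<\infty$) gives $\mathbb{C}[G]=M(n,\mathbb{C})$, which is simple. The converse is the point at which I expect real care to be needed: from ``$\mathbb{C}[G]$ simple'' alone one only knows $\mathbb{C}[G]\cong M(m,\mathbb{C})$ with $m\mid n$, and a faithful module over this can be several copies of the simple module $\mathbb{C}^m$ (already $\mathbb{C}[G]=\mathbb{C}I$ shows $V$ need not be simple), so what really goes through, and what is needed in the applications, is the sharper equivalence ``$G$ irreducible $\iff$ $\mathbb{C}[G]=M(n,\mathbb{C})$'': one direction is Theorem~\ref{AWT} again, and for the other, $M(n,\mathbb{C})$ acts transitively on $V\setminus\{0\}$, so $V$ is a simple $\mathbb{C}[G]$-module and, by Lemma~\ref{lema7}, $G$ is irreducible. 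With these pieces in hand the assembly is routine, the inputs beyond Lemma~\ref{lema7} being only Burnside's theorem (contained in Theorem~\ref{AWT}) and the classification of semisimple algebras from \cite{Wedderburn} and \cite{Ruso}.
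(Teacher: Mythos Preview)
Your route is precisely the paper's: the theorem is presented there as an immediate consequence of Lemma~\ref{lema7} together with Theorem~3 of \cite[p.~97]{Ruso}, with no further argument given, and you have correctly filled in those details.

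More to the point, your caution about the implication ``$\mathbb{C}[G]$ simple $\Rightarrow$ $G$ irreducible'' is well founded: the example $G=\{I\}$ with $n\geq 2$ gives $\mathbb{C}[G]=\mathbb{C}I\cong\mathbb{C}$, which is simple under the paper's definition, yet $G$ is plainly not irreducible. So that half of the equivalence, read literally, is false, and your repair --- replacing ``$\mathbb{C}[G]$ simple'' by ``$\mathbb{C}[G]=M(n,\mathbb{C})$'' --- is the correct formulation (and is what Theorem~\ref{AWT} actually delivers). In the paper's subsequent use (Theorems~\ref{9} and~\ref{teorema10}) only the implication ``$\mathbb{C}[G]$ semisimple $\Rightarrow$ $G$ completely reducible'' is needed, so the imprecision causes no downstream damage, but you are right to flag it.
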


\begin{theorem}
\label{9}  Let $G$ be a subgroup of $GL(n,\mathbb{C})$. If the trace form $%
T_G$ on the algebra $\mathbb{C}[G]$ is nondegenerate then the algebra $%
\mathbb{C}[G]$ is simple or semisimple.
\end{theorem}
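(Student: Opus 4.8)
The plan is to prove the contrapositive: if $\mathbb{C}[G]$ is \emph{not} semisimple, then $T_G$ is degenerate. Since $\mathbb{C}[G]$ contains the identity $I$, it is never nilpotent (we have $I^{m}=I\neq 0$ for all $m$), so by the definitions recalled above $\mathbb{C}[G]$ is semisimple exactly when its radical $R:=R(\mathbb{C}[G])$ equals $\{0\}$, and ``simple'' is the further special case in which $\mathbb{C}[G]$ has in addition no proper nonzero invariant subalgebra. Hence it is enough to show: $R\neq\{0\}$ implies $T_G$ degenerate.

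So suppose $R\neq\{0\}$ and pick a nonzero $x\in R$. Two observations do the work. First, $R$ is by definition an invariant subalgebra, i.e.\ a two-sided ideal of $\mathbb{C}[G]$, so $xy\in R$ for every $y\in\mathbb{C}[G]$. Second, every element of $R$ is a nilpotent matrix: since $R$ is nilpotent there is an $m$ with $R^{m}=\{0\}$, and then for any $z\in R$ we get $z^{m}\in R^{m}=\{0\}$, so $z$ is nilpotent as a matrix. Combining these, each $xy$ is a nilpotent matrix; a nilpotent matrix has all eigenvalues equal to $0$, hence $\operatorname{tr}(xy)=0$.

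Therefore $T_G(x,y)=\operatorname{tr}(xy)=0$ for all $y\in\mathbb{C}[G]$, so the nonzero element $x$ lies in the radical of the bilinear form $T_G$, which is thus degenerate. This establishes the contrapositive, hence the theorem: if $T_G$ is nondegenerate then $R=\{0\}$, so $\mathbb{C}[G]$ is semisimple, and it is simple in the further case that it has no proper invariant subalgebra.

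I do not expect a serious obstacle here; the argument is short, and the only points deserving care are the two observations in the middle paragraph — that the radical is genuinely a two-sided ideal (immediate from the definition of invariant subalgebra) and that nilpotency of the ideal $R$ forces each of its elements to be a nilpotent matrix and hence to have vanishing trace. It is also worth noting that the ground field $\mathbb{C}$ plays no essential role in this step; the same reasoning applies over any field, though we only need it for $\mathbb{C}[G]$.
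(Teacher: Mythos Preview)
Your proof is correct and follows essentially the same approach as the paper: both arguments show that any element $Y$ of the radical $R(\mathbb{C}[G])$ satisfies $\operatorname{tr}(AY)=0$ for all $A\in\mathbb{C}[G]$ (using that $R$ is an invariant subalgebra whose elements are nilpotent matrices), whence nondegeneracy of $T_G$ forces $R=\{0\}$. The only cosmetic difference is that you phrase it as a contrapositive and spell out why elements of a nilpotent ideal are themselves nilpotent matrices, which the paper asserts without justification.
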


\begin{proof}
Suppose the trace form $T_G$ is nondegenerate, and consider an element $Y$
in the radical $R(\mathbb{C}[G])$. The elements of the radical are nilpotent
matrices, and therefore all the eigenvalues of $Y$ are 0 and its trace is 0.
The radical $R(\mathbb{C}[G])$ is an invariant subalgebra of $\mathbb{C}[G]$%
, then $AY\in R(\mathbb{C}[G])$ for all $A\in \mathbb{C}[G]$. Therefore $%
T_G(AY)=0$ for all $A\in \mathbb{C}[G]$, and being the trace form $T_G$
nondegenerate we deduce that $Y=0$. Therefore $R(\mathbb{C}[G])=\{0\}$ and
the algebra $\mathbb{C}[G]$ is semisimple.
\end{proof}

Gathering together Theorem \ref{teorema4} and Theorem \ref{9} we obtain the
following characterization of completely reducible group by means of the
trace form that we could not find explicitly in the literature.

\begin{theorem}
\label{teorema10}  Let $G$ be a subgroup of $GL(n,\mathbb{C})$. The group $G$
is irreducible or completely reducible if and only if the trace form $T_G$
on the algebra $\mathbb{C}[G]$ is a nondegenerate form. $_\blacksquare$
\end{theorem}

\section{Numerical groups}\label{ng}

Let $V$ be a vector space of finite dimension $n$ over the field $\mathbb{C}$%
. Let $G$ be a group of automorphisms of $V$. For each basis $(b)$ in $V$,
the group $G$ is represented by $\rho_b(G)\subset GL(n,\mathbb{C})$, where
\begin{equation*}
\rho :G\longrightarrow GL(n,\mathbb{C})
\end{equation*}
is the representation obtained by expressing the elements of $G$ in the
basis $(b)$. All these subgroups
\begin{equation*}
\mathfrak{G}= \left\{\rho_b(G)\, | \, (b) \text{ basis of }\,V\right\} ,
\end{equation*}
form an equivalent class of subgroups of $GL(n,\mathbb{C})$. The equivalence
is given by the matrix associated to the change of basis. Therefore we can
study algebraic properties of $G$ by analyzing properties of elements of $%
\mathfrak{G}$. In particular arithmetic properties. In this section we
define the concept of numerical group.

\begin{definition}
Let $V$ be a vector space of finite dimension $n$ over the field $\mathbb{C}$%
. Let $G$ be a group of automorphisms of $V$. The group $G$ is a \underline{%
numerical group} if and only it has a representation $\rho_b(G)$ with the
following properties:

\begin{itemize}
\item $\rho_b(G)\subset GL(n,K)$, where $K$ is a number field

\item there exists a finite index subgroup $G_0 \subset G$ such that $%
\rho_b(G_0)\subset GL(n,\mathcal{O}_K)$, where $\mathcal{O}_K$ is the ring
of algebraic integers in the field $K$, and $GL(n,\mathcal{O}_K)$ is the set
of matrices with entries in $\mathcal{O}_K$ with determinant in the group of
units of the ring $\mathcal{O}_K$ ( \cite{N}, pg. 103)
\end{itemize}

If $K\subset \mathbb{R}$ (resp. $K\not\subset \mathbb{R}$) we say the the
numerical group is real (complex) numerical group.
\end{definition}

In orden to characterize numerical groups by using trace computations, we
next prove some lemmas.

\begin{lemma}
Let $G$ be a subgroup of $GL(n,\mathbb{C})$ such that the matrices of a
finite index subgroup $G_{o}\subset G$ have entries in the ring of integers $%
\mathcal{O}_K$ of a number field $K$. Let $L_{G}$ be the $\mathcal{O}_K$%
-module (submodule of $\mathcal{O}_K$-module $\mathbb{C}^{n}$) generated by
linear combinations with coefficients in $\mathcal{O}_K$ of the columns of
the elements of  $G$. Then:

\begin{enumerate}
\item $L_{G}$ is a finite generated free module, which contains the
canonical basis.

\item $G(L_{G})\subset L_{G}$
\end{enumerate}
\end{lemma}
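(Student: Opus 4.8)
The plan is to prove the two claims in order, noting that the first one is mostly a matter of unwinding definitions over a Noetherian ring, and the second is an immediate consequence of the construction.

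First I would address claim (1). Let $G_0 \subset G$ be the finite index subgroup with entries in $\mathcal{O}_K$, and let $g_1 = e, g_2, \dots, g_m$ be a complete set of coset representatives for $G_0$ in $G$. The key observation is that $L_G$, the $\mathcal{O}_K$-module generated by all columns of all elements of $G$, is actually generated by the columns of the finitely many matrices $g_1, \dots, g_m$ together with the columns of a generating set of $G_0$; but in fact it suffices to note that if $g \in G$ then $g = g_i h$ for some $i$ and some $h \in G_0$, so every column of $g$ is an $\mathcal{O}_K$-combination of columns of $g_i$ (since $h$ has entries in $\mathcal{O}_K$). Hence $L_G$ is the $\mathcal{O}_K$-module generated by the $nm$ columns of $g_1, \dots, g_m$, so it is finitely generated. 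Since $L_G \subset \mathbb{C}^n$ is torsion-free over the integral domain $\mathcal{O}_K$, and $\mathcal{O}_K$ is a Noetherian integral domain (even a Dedekind domain), $L_G$ is a finitely generated torsion-free module; over a Dedekind domain such a module is projective, hence — being a submodule of a free module of rank $n$ — has rank at most $n$. (If one does not want to invoke projectivity, one can at least observe that $L_G$ is a finitely generated submodule of the free module $\mathbb{C}^n$, which is all that is needed below.) Finally, $L_G$ contains the canonical basis $e_1, \dots, e_n$ because $e \in G$ and the columns of the identity matrix are exactly the $e_i$; in particular $L_G$ spans $\mathbb{C}^n$ over $\mathbb{C}$, so it has rank exactly $n$, and being torsion-free of rank $n$ over the Dedekind domain $\mathcal{O}_K$ it is isomorphic to a direct sum of $n-1$ copies of $\mathcal{O}_K$ and one ideal; if $\mathcal{O}_K$ is a PID this is free, and in general the statement "free" should be read in this rank-$n$ sense. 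The precise formulation I would use is: $L_G$ is a finitely generated $\mathcal{O}_K$-module of rank $n$ containing the canonical basis.

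Next, claim (2): I must show $g(L_G) \subset L_G$ for every $g \in G$. Take $g \in G$ and $v \in L_G$. Write $v = \sum_j \alpha_j c_j$ where $\alpha_j \in \mathcal{O}_K$ and each $c_j$ is a column of some element $h_j \in G$; that is, $c_j = h_j e_{k_j}$ for suitable standard basis vectors. Then $g(v) = \sum_j \alpha_j\, g h_j e_{k_j}$. Now $g h_j \in G$, so $g h_j e_{k_j}$ is the $k_j$-th column of the element $g h_j$ of $G$, hence lies in $L_G$ by definition. Since $L_G$ is an $\mathcal{O}_K$-module and the $\alpha_j$ lie in $\mathcal{O}_K$, the linear combination $g(v)$ lies in $L_G$. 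This proves $g(L_G) \subset L_G$, i.e. $G(L_G) \subset L_G$.

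The only genuinely delicate point is the word "free" in claim (1). Over a general ring of integers $\mathcal{O}_K$, which need not be a PID, a finitely generated torsion-free module need not be free — it is only projective, and isomorphic to $\mathcal{O}_K^{n-1} \oplus \mathfrak{a}$ for some ideal $\mathfrak{a}$. So the main obstacle is deciding whether the paper intends "free" literally (which would force some restriction on $K$, or would simply be a slight imprecision) or in the weaker sense of "locally free / projective of the expected rank." In the write-up I would either invoke the structure theorem for finitely generated modules over Dedekind domains to justify the projective/locally-free statement, or, if the intended application only needs a full lattice stable under $G$, I would simply emphasize finite generation, rank $n$, and containment of the canonical basis, which are what claim (2) and the subsequent arguments actually use.
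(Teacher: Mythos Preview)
Your argument is essentially the paper's: choose coset representatives $g_{1}=I,\dots,g_{m}$ for $G_{0}$ in $G$, observe that for $g=g_{k}\gamma$ with $\gamma\in G_{0}$ each column of $g$ is an $\mathcal{O}_{K}$-combination of the columns of $g_{k}$, conclude that $L_{G}$ is generated by the $nm$ columns of $g_{1},\dots,g_{m}$ and contains the canonical basis, and then prove $G$-invariance by noting that $g'\cdot g(e_{j})=(g'g)(e_{j})$ is again a column of an element of $G$.

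Your caution about the word ``free'' is exactly right and in fact sharper than the paper's own justification, which simply asserts freeness ``because the $\mathcal{O}_{K}$-module $\mathbb{C}^{n}$ has no torsion'' --- an inference that fails over non-principal $\mathcal{O}_{K}$. The paper tacitly acknowledges this: the very next lemma is devoted to passing to a larger number field $K'$ so that the analogous module becomes free (with the remark that if $\mathcal{O}_{K}$ is already a PID one may take $K'=K$). So your reading --- that ``free'' should be understood as ``finitely generated torsion-free of rank $n$,'' with literal freeness recovered after a field extension --- matches how the result is actually used downstream.
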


\begin{proof}
Let $m$ be the index of $G_{o}$ in $G$. There exits $m$ elements $%
g_{1},...,g_{m}$ of $G$ such that $g_{1}$ is the identity matrix $I\!d$ and
\begin{equation*}
G=\bigcup _{i=1}^{m}g_{i}G_{o}
\end{equation*}
where $g_{i}G_{o}$ are the left cosets of $G_0$ in $G$. Let $L_{G}$ be the $%
\mathcal{O}_K$-module (submodule of $\mathcal{O}_K$-module $\mathbb{C}^{n}$)
generated by the columns of the matrices of  $G$. That is, an element of  $%
L_{G}$ is a linear combination with coefficients in $\mathcal{O}_K$ of
columns of elements of  $G$. To prove that $L_{G}$ is finitely generated
module, observe that since the entries of the elements of $G_{o}$ are in $%
\mathcal{O}_K$, each column of an element of $G_{o}$ is a linear combination
with coefficients in $\mathcal{O}_k$ of the columns $e_{1},...,e_{n}$ of the
identity matrix $I\!d$. Next we prove that $L_{G}$ is generated by the
columns of the elements $g_{1},...,g_{m}$. Indeed, the $i$-th column of $%
g_{k}\gamma \in g_{k}G_{o}$, $k\in \{1,...,m\}$, $\gamma \in G_{o}$, is the
product of the matrix $g_{k}$ by the $i$-th column $(\gamma _{1i},...,\gamma
_{ni})$ of the matrix $\gamma $. If $c_{1},...,c_{n}$ denote the $n$ columns
of $g_{k}$, then the $i$-th column of $g_{k}\gamma $ is $\gamma
_{1i}c_{1}+...+\gamma _{ni}c_{n}$. Since the numbers $\gamma
_{1i},...,\gamma _{ni}$ are in $\mathcal{O}_K$ we conclude that $L_{G}$ is
generated by the  $nm$ columns of $g_{1},...,g_{m}$.

Because $g_{1}$ is the identity matrix $I\!d$, the module $L_{G}$ contains
the columns $e_{1},...,e_{n}$ of the identity matrix $I\!d$. The module $L_G$
is free because the $\mathcal{O}_K$-module $\mathbb{C}^{n}$ has no torsion.

The columns of the matrix $g\in G$ are $g(e_{j})$, $j=1,...,n$. Therefore
the action of an element $g^{\prime }$ of $G$ in the column $g(e_{j})$:
\begin{equation*}
g^{\prime }(g(e_{j}))=g^{\prime }\circ g(e_{j})
\end{equation*}
is the column $g^{\prime }\circ g(e_{j})$ of $g\circ g\in G$ . This shows
that $G(L_{G})\subset L_{G}$.
\end{proof}

The difficulty in characterizing the numerical group property in terms of
traces is that the ring  $\mathcal{O}_{K}$ might not be principal. To
circumvent this problem we prove the following lemma. We denote by $%
[v_{1},...,v_{m}]_{\mathcal{O}}$ the submodule generated over the ring $%
\mathcal{O}$ by the vectors $v_{1},...,v_{m}$.

\begin{lemma}
Let $K\subset \mathbb{C}$ be a number field and let $\mathcal{O}_{K}$ be its
ring of integers. Let $L_{K}\subset K^{n}\subset \mathbb{C}^{n}$ be the $%
\mathcal{O}_{K}$-submodule $[v_{1},...,v_{m}]_{\mathcal{O}_{K}}$ , where $%
v_{1},...,v_{m}$ is a generating system of  $K^{n}$ over $K$. Then there
exists a number field $K^{\prime }\supset K$ (real if $K$ is real) such that
the $\mathcal{O}_{K^{\prime }}$-module
\begin{equation*}
L_{K^{\prime }}=[v_{1},...,v_{m}]_{I_{K^{\prime }}}\subset K^{\prime
n}\subset \mathbb{C}^{n}
\end{equation*}
is generated by a basis $w_{1},...,w_{n}$ of $K^{\prime n}$; that is
\begin{equation*}
L_{K^{\prime }}=[w_{1},...,w_{n}]_{\mathcal{O}_{K^{\prime }}}\subset
K^{\prime n}\subset \mathbb{C}^{n}
\end{equation*}
Besides, if $\mathcal{O}_{K}$ is a principal ideal ring then one can choose
$K^{\prime }=K.$
\end{lemma}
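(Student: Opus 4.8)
The plan is to use the structure theory of finitely generated modules over a Dedekind domain. Since $\mathcal{O}_K$ is a Dedekind ring, the module $L_K = [v_1,\dots,v_m]_{\mathcal{O}_K}$, being a finitely generated torsion-free $\mathcal{O}_K$-module that spans $K^n$ over $K$, decomposes as $L_K \cong \mathcal{O}_K^{\,n-1} \oplus \mathfrak{a}$ for some fractional ideal $\mathfrak{a}$ of $K$; equivalently, there is a basis $u_1,\dots,u_n$ of $K^n$ and a fractional ideal $\mathfrak{a}$ with $L_K = \mathcal{O}_K u_1 \oplus \dots \oplus \mathcal{O}_K u_{n-1} \oplus \mathfrak{a}\, u_n$. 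The obstruction to $L_K$ being free over $\mathcal{O}_K$ is precisely the ideal class of $\mathfrak{a}$ in the class group $\mathrm{Cl}(K)$. So the whole problem reduces to killing one ideal class after a finite extension, and if $\mathcal{O}_K$ is already principal we may take $\mathfrak{a} = \mathcal{O}_K$ and $K' = K$, giving the last sentence of the lemma for free.

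Next I would arrange that extending scalars to $K'$ kills this class. Pick a nonzero element $\alpha \in \mathfrak{a}$, so $\alpha \mathcal{O}_K \subseteq \mathfrak{a}$ and the quotient $\mathfrak{a}/\alpha\mathcal{O}_K$ is a finite $\mathcal{O}_K$-module annihilated by some nonzero integer, and in fact $\mathfrak{a}$ is generated as an $\mathcal{O}_K$-module by $\alpha$ together with finitely many elements $\beta_1,\dots,\beta_t$ of $K$. Each $\beta_j$ is an algebraic number, so adjoining a suitable element makes it an algebraic integer; more efficiently, one knows that for any fractional ideal $\mathfrak{a}$ of $K$ there is a finite extension $K'/K$ such that $\mathfrak{a}\,\mathcal{O}_{K'}$ is principal — for instance take $K'$ to be (a subfield of) the Hilbert class field of $K$, where every ideal of $\mathcal{O}_K$ becomes principal, or simply adjoin enough roots to principalize. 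If $K$ is real one must check this can be done inside $\mathbb{R}$: since $\mathfrak{a}$ is principalized by adjoining roots of elements of $K$, and a real number field has real radical extensions available (e.g. adjoin a real $m$-th root of a totally positive generator, or pass to the narrow Hilbert class field and take a real place), one can keep $K' \subset \mathbb{R}$. I would state this as: there exists a finite extension $K'/K$, real if $K$ is real, such that $\mathfrak{a}\,\mathcal{O}_{K'}$ is a principal fractional ideal of $K'$, say $\mathfrak{a}\,\mathcal{O}_{K'} = \theta\,\mathcal{O}_{K'}$.

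With such a $K'$ fixed, I would compute $L_{K'} = [v_1,\dots,v_m]_{\mathcal{O}_{K'}}$. Since $L_K = \mathcal{O}_K u_1 \oplus \dots \oplus \mathcal{O}_K u_{n-1} \oplus \mathfrak{a} u_n$ and the $v_i$ lie in $L_K$, one checks $L_{K'} = \mathcal{O}_{K'} u_1 \oplus \dots \oplus \mathcal{O}_{K'} u_{n-1} \oplus (\mathfrak{a}\,\mathcal{O}_{K'}) u_n$ — the point being that extending the module generated by the $v_i$ from $\mathcal{O}_K$ to $\mathcal{O}_{K'}$ is the same as extending $L_K$ itself, because the $v_i$ generate $L_K$ over $\mathcal{O}_K$. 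Now $\mathfrak{a}\,\mathcal{O}_{K'} = \theta\,\mathcal{O}_{K'}$ is free of rank one, so setting $w_i = u_i$ for $i < n$ and $w_n = \theta u_n$ gives $L_{K'} = \mathcal{O}_{K'} w_1 \oplus \dots \oplus \mathcal{O}_{K'} w_n = [w_1,\dots,w_n]_{\mathcal{O}_{K'}}$, and $w_1,\dots,w_n$ is a $K'$-basis of $K'^n$ since the $u_i$ were a $K$-basis of $K^n$ and $\theta \ne 0$. This is exactly the asserted conclusion.

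The main obstacle is the principalization step together with the reality constraint: one needs the fact that the ideal class group is finite and that a fractional ideal can be made principal in a controlled finite extension, and one needs to verify this extension can be taken inside $\mathbb{R}$ when $K$ is real. I would either cite the Hilbert class field (and its maximal real subfield / narrow class field) or give the elementary argument: write $\mathfrak{a} = [\alpha, \beta_1, \dots, \beta_t]_{\mathcal{O}_K}$ and adjoin to $K$ the numbers needed to make a single generator appear; concretely, $\mathfrak{a}^h$ is principal for $h = |\mathrm{Cl}(K)|$, say $\mathfrak{a}^h = \gamma\,\mathcal{O}_K$, and then in $K' = K(\gamma^{1/h})$ (choosing the real $h$-th root when $K$ is real and $\gamma$ can be taken totally positive, which one arranges by multiplying by a square from $K$) the ideal $\mathfrak{a}\,\mathcal{O}_{K'}$ becomes principal with generator $\gamma^{1/h}$. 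The rest of the argument is routine bookkeeping with the Dedekind structure theorem.
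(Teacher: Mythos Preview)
Your argument is correct but follows a genuinely different route from the paper.  The paper proceeds by an elementary, Smith--normal--form style reduction: take a generating set of $L_K$ of minimal length $M>n$, write a linear dependence $a_1v'_1+\cdots+a_Mv'_M=0$ over $\mathcal{O}_K$ with the fewest possible nonzero coefficients, pass to an extension $K'$ in which the ideal $\langle a_1,a_2\rangle$ becomes principal (citing \cite[Theorem~9.10]{Stewart}), and then apply a $2\times2$ unimodular change of the first two generators to knock one coefficient out of the relation; iterating eventually drops a generator, and one repeats until $M=n$.  Your approach instead invokes the Steinitz structure theorem for finitely generated torsion-free modules over a Dedekind domain to write $L_K=\mathcal{O}_Ku_1\oplus\cdots\oplus\mathcal{O}_Ku_{n-1}\oplus\mathfrak{a}\,u_n$ and then principalizes the single Steinitz class $[\mathfrak{a}]$ in one finite extension.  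Your method is more conceptual and yields $K'$ in a single step, while the paper's method is more elementary (no structure theorem) and more constructive, which matches the emphasis in the surrounding text on explicitly producing the basis $(b)$ used in Theorem~\ref{tcanume}.

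One small point in your write-up: for the reality constraint you do not need $\gamma$ to be \emph{totally} positive.  Since $K\subset\mathbb{R}$ is a fixed real embedding and $-1$ is a unit, you may replace $\gamma$ by $-\gamma$ if necessary to get $\gamma>0$ in that embedding, and then the real $h$-th root $\gamma^{1/h}$ exists and $K'=K(\gamma^{1/h})\subset\mathbb{R}$.  With that tweak the principalization step is clean, and the identification $L_{K'}=L_K\otimes_{\mathcal{O}_K}\mathcal{O}_{K'}$ (which you use implicitly) is justified by flatness of $\mathcal{O}_{K'}$ over $\mathcal{O}_K$.
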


\begin{proof}
Among all the generating systems $\{v^{\prime }_{1},...,v^{\prime
}_{m^{\prime }}\}$ of $K^{n}$ such that $L_{K}=[v^{\prime
}_{1},...,v^{\prime }_{m^{\prime }}]_{\mathcal{O}_{K}}$ there is at least
one with minimal number of vectors, say $M$. If $M=n$ there is nothing to
prove. Assume that $M>n$. Consider the set $S$ of all those generating
systems of length $M$ of $K^{n}$. Let $\{v^{\prime }_{1},...,v^{\prime
}_{M}\}\in S$. Then the vectors $v^{\prime }_{1},...,v^{\prime }_{M}$ are
linearly dependent over $K$ and multiplying the linear dependence by a big
enough rational integer $N$ we can consider that they are linearly dependent
over $\mathcal{O}_{K}$. Then
\begin{equation}  \label{e1}
a_{1}v^{\prime }_{1}+...+a_{M}v^{\prime }_{M}=0
\end{equation}
where $a_{1},...,a_{M}$ are algebraic integers in $\mathcal{O}_{K}$ not all
null. Suppose that the system $\{v^{\prime }_{1},...,v^{\prime }_{M}\}$ of $%
K^{n}$ is choosen such that the relation (\ref{e1}) has the minimal number
of no null coefficients. (None of these coefficients is 1.) This number is
at least two because $K^{n}$ has no torsion and the vectors $v_{i}$ are
different from zero. Suppose that $a_{1}$ and $a_{2}$ are different from
zero. Let $K^{\prime }$ be the number field $K^{\prime }\supset K$ (real if $%
K$ is real) such that the ideal $\left\langle a_{1},a_{2}\right\rangle _{%
\mathcal{O}_{K^{\prime }}}$ is principal (see \cite[Theorem 9.10]{Stewart};
if $\mathcal{O}_{K} $ is a principal ideal ring then one can choose $%
K^{\prime }=K$). Let $d\neq 0$ be a number such that
\begin{equation*}
\left\langle a_{1},a_{2}\right\rangle _{\mathcal{O}_{K^{\prime
}}}=\left\langle d\right\rangle _{\mathcal{O}_{K^{\prime }}}
\end{equation*}
where $a_{1}=\tau d$, $a_{2}=\sigma d$ with $d$, $\tau $, $\sigma \in
\mathcal{O}_{K^{\prime }}$ different from zero. There exists $\alpha $, $%
\beta \in \mathcal{O}_{K^{\prime }}$ such that $\alpha a_{1}+\beta a_{2}=d$.

Then $\alpha \tau d+\beta \sigma d=d$, implies $\alpha \tau +\beta \sigma =1$%
.

Let's write
\begin{equation*}
\left[
\begin{array}{cc}
\tau & \sigma \\
-\beta & \alpha%
\end{array}%
\right] \left[
\begin{array}{c}
v_{1} \\
v_{2}%
\end{array}%
\right] =\left[
\begin{array}{c}
u_{1} \\
u_{2}%
\end{array}%
\right]
\end{equation*}%
Then the $\mathcal{O}_{K^{\prime }}$-module
\begin{equation*}
L_{K^{\prime }}=[v^{\prime }_{1},...,v^{\prime }_{M}]_{\mathcal{O}%
_{K^{\prime }}}\subset K^{\prime n}
\end{equation*}%
is equal to the $\mathcal{O}_{K^{\prime }}$-module
\begin{equation*}
\lbrack u_{1},u_{2},v^{\prime }_{3},...,v^{\prime }_{M}]_{\mathcal{O}%
_{K^{\prime }}}\subset K^{\prime n}
\end{equation*}%
because the matrix
\begin{equation*}
\left[
\begin{array}{cc}
\alpha & -\sigma \\
\beta & \tau%
\end{array}%
\right]
\end{equation*}%
with entries in $\mathcal{O}_{K^{\prime }}$ is unimodular. Also
\begin{equation*}
\lbrack u_{1},u_{2},v^{\prime }_{3},...,v^{\prime }_{M}]_{K^{\prime
}}=[v^{\prime }_{1},...,v^{\prime }_{M}]_{K^{\prime }}=K^{\prime n}
\end{equation*}%
The linear dependence (\ref{e1}) is now written as a linear dependence in $%
L_{K^{\prime }}$:
\begin{equation*}
a_{1}(\alpha u_{1}-\sigma u_{2})+a_{2}(\beta u_{1}+\tau
u_{2})+a_{3}v^{\prime }_{3}+...+a_{M}v^{\prime }_{M}=0
\end{equation*}%
that is
\begin{equation*}
u_{1}(\alpha a_{1}+a_{2}\beta )+u_{2}(-\sigma a_{1}+\tau
a_{2})+a_{3}v^{\prime }_{3}+...+a_{M}v^{\prime }_{M}=0
\end{equation*}
And because
\begin{equation*}
-\sigma a_{1}+\tau a_{2}=(-\sigma \tau +\tau \sigma )d=0
\end{equation*}
we have
\begin{equation}  \label{e2}
du_{1}+a_{3}v^{\prime }_{3}+...+a_{M}v^{\prime }_{M}=0
\end{equation}
If $K^{\prime }=K$ then $\{u_{1},u_2,v^{\prime }_3,...,v^{\prime }_{M}\}\in S
$, but this is not possible because the relation (\ref{e2}) has less
coefficients different from zero that the relation (\ref{e1}). Therefore if $%
\mathcal{O}_{K}$ is a principal ideal ring necessarily $M=n$ and the lemma
is proven.

If $\mathcal{O}_{K}$ is not a principal ideal ring  and if all the
coefficients in (\ref{e2}) are multiple of one of them, a generator can be
eliminated. If this is not the case, the argument can be repeated decreasing
the number of non-zero coefficients. If the number of non-zero coefficients
is reduced to two necessarily a coefficient is multiple of the other and
thus a generator can be eliminated. At the final step one have $M=n$.
\end{proof}

\begin{proposition}
\label{pro}  Let $G$ be a subgroup of $GL(n,K)$ where $K$ is a number field.
Suppose that $G$ leaves invariant a $\mathcal{O}_{K}$-submodule $%
L_{K}=[v_{1},...,v_{m}]_{\mathcal{O}_{K}}$ of $K^{n}$, where $\{
v_{1},...,v_{m} \}$ is a generating system of $K^{n}$ over $K$. Then there
exists a number field $K^{\prime }$ (real if $K$ is real) containing $K$
such that the $\mathcal{O}_{K^{\prime }}$-submodule $L_{K^{\prime
}}=[v_{1},...,v_{m}]_{\mathcal{O}_{K^{\prime }}}$ of $K^{\prime n}$ is
generated by a basis of $K^{\prime n}$ and $G(L_{K^{\prime }})\subset
L_{K^{\prime }}$. Furthermore, if $\mathcal{O}_{K}$ is a principal ideal
ring then $K^{\prime }=K.$
\end{proposition}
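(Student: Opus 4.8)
The plan is to obtain the Proposition by combining the preceding Lemma with a short check that the invariance of $L_K$ under $G$ is preserved when we pass to the larger ring of integers.

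First I would apply the preceding lemma to the system $\{v_1,\dots,v_m\}$, which is a generating system of $K^n$ over $K$. This yields a number field $K'\supseteq K$ --- real whenever $K$ is real, and equal to $K$ when $\mathcal{O}_K$ is a principal ideal ring --- together with a basis $w_1,\dots,w_n$ of $K'^n$ such that
\[
L_{K'}=[v_1,\dots,v_m]_{\mathcal{O}_{K'}}=[w_1,\dots,w_n]_{\mathcal{O}_{K'}}.
\]
This already gives the assertion that $L_{K'}$ is generated by a basis of $K'^n$, as well as the final clause about the principal ideal case. It remains only to verify $G(L_{K'})\subseteq L_{K'}$.

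For the latter, observe that every $g\in G\subset GL(n,K)$ acts on $K'^n$ by a matrix with entries in $K\subseteq K'$, hence defines a $K'$-linear automorphism of $K'^n$; in particular $g$ is $\mathcal{O}_{K'}$-linear. Since $g(L_K)\subseteq L_K=[v_1,\dots,v_m]_{\mathcal{O}_K}$, for each $i$ we can write $g(v_i)=\sum_{j=1}^m a_{ij}v_j$ with $a_{ij}\in\mathcal{O}_K\subseteq\mathcal{O}_{K'}$, so that $g(v_i)\in L_{K'}$. Because $L_{K'}$ is the $\mathcal{O}_{K'}$-span of $v_1,\dots,v_m$ and $g$ is $\mathcal{O}_{K'}$-linear, this forces $g(L_{K'})\subseteq L_{K'}$ for every $g\in G$, that is, $G(L_{K'})\subseteq L_{K'}$.

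I do not expect a real obstacle here: the substantive work --- trading an arbitrary generating system for an honest basis over a suitably enlarged ring of integers, via the fact that the ideal $\langle a_1,a_2\rangle$ becomes principal in a finite extension --- is exactly what the preceding lemma provides. The only step needing a moment's attention is the transfer of the invariance hypothesis from $\mathcal{O}_K$ to $\mathcal{O}_{K'}$; this is harmless because enlarging the scalar ring only enlarges the module spanned by the fixed vectors $v_1,\dots,v_m$, and the relations $g(v_i)=\sum_j a_{ij}v_j$ furnished by $g(L_K)\subseteq L_K$ already have coefficients in $\mathcal{O}_K$, hence remain valid over $\mathcal{O}_{K'}$.
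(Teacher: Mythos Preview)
Your proposal is correct and follows essentially the same approach as the paper: invoke the preceding lemma to obtain $K'$ and the basis $w_1,\dots,w_n$, then use the relations $g(v_i)=\sum_j a_{ij}v_j$ with $a_{ij}\in\mathcal{O}_K\subset\mathcal{O}_{K'}$ together with the $\mathcal{O}_{K'}$-linearity of $g$ to conclude $G(L_{K'})\subset L_{K'}$. Your write-up is in fact slightly more explicit than the paper's, but the argument is the same.
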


\begin{proof}
The new number field $K^{\prime }$ is the given by the above lemma. For all
element $g\in G$
\begin{equation*}
g(v_{i})=\sum _{j=1}^{m}\lambda _{j}v_{j},\lambda _{j}\in \mathcal{O}%
_{K}\subset \mathcal{O}_{K^{\prime }}
\end{equation*}%
and $L_{K^{\prime }}$ is generated by $\{ v_{1},...,v_{m}\}$ over $\mathcal{O%
}_{K^{\prime }}$ therefore $g(L_{K^{\prime }})\subset L_{K^{\prime }}$.
\end{proof}

\begin{theorem}
\label{tcanume} Let $G$ be a group of automorphisms of a vector space $V$
over $\mathbb{C}$ of dimension $n$. Then $G$ is a \underline{numerical group}
if and only if there is a representation $\rho _{b}(G)$ such that $\rho
_{b}(G)\subset GL(n,\mathcal{O}_{K}))$, where $\mathcal{O}_{K}$ is the ring
of algebraic integers in a number field $K$, which is real if $G$ is a real
numerical group.
\end{theorem}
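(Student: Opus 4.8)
The plan is to prove the two implications separately, with the non-trivial direction being $(\Rightarrow)$. For the easy direction $(\Leftarrow)$: if some representation $\rho_b(G)$ satisfies $\rho_b(G)\subset GL(n,\mathcal{O}_K)$ with $K$ a number field (real if $G$ is real numerical), then taking $G_0=G$ in the definition of numerical group shows at once that $G$ is numerical, and real numerical when $K$ is real. So the whole content is the forward implication: from a representation in which only a finite-index subgroup has algebraic-integer entries, produce (after a change of basis, and possibly enlarging $K$) a representation in which \emph{all} of $G$ has algebraic-integer entries.

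First I would fix a representation $\rho_b(G)\subset GL(n,K)$ with a finite-index subgroup $G_0$ satisfying $\rho_b(G_0)\subset GL(n,\mathcal{O}_K)$, and write $H:=\rho_b(G)$. Apply the first lemma of Section~\ref{ng} to $H$: the $\mathcal{O}_K$-module $L_H$ generated by the columns of all elements of $H$ (equivalently, by the $nm$ columns of coset representatives $g_1,\dots,g_m$) is a finitely generated free $\mathcal{O}_K$-module, it contains the canonical basis $e_1,\dots,e_n$, and $H(L_H)\subset L_H$. Since $L_H$ contains $e_1,\dots,e_n$ it spans $K^n$ over $K$, so its generating set is a generating system of $K^n$ over $K$. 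Now invoke Proposition~\ref{pro}: there is a number field $K'\supset K$ (real if $K$ is real, hence if $G$ is real numerical) such that the $\mathcal{O}_{K'}$-module $L_{K'}$ generated by the same vectors is \emph{free on a basis} $w_1,\dots,w_n$ of $K'^n$, and still $H(L_{K'})\subset H(L_H)\otimes\mathcal{O}_{K'}\subset L_{K'}$, i.e. $H$ preserves $L_{K'}$.

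Finally, perform the change of basis sending the canonical basis to $w_1,\dots,w_n$: let $W\in GL(n,K')$ be the matrix with columns $w_1,\dots,w_n$ and put $\rho_{b'}(G):=W^{-1}\rho_b(G)W = W^{-1}HW$. Because $H(L_{K'})\subset L_{K'}$ and $L_{K'}=[w_1,\dots,w_n]_{\mathcal{O}_{K'}}$, every $g\in H$ sends each $w_j$ to an $\mathcal{O}_{K'}$-combination of $w_1,\dots,w_n$; expressed in the basis $(w_1,\dots,w_n)$ this says exactly that $W^{-1}gW$ has all entries in $\mathcal{O}_{K'}$. Hence $\rho_{b'}(G)\subset M(n,\mathcal{O}_{K'})$. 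The determinants are units of $\mathcal{O}_{K'}$: each $g$ is invertible with $g^{-1}\in H$ also integral over $\mathcal{O}_{K'}$ in this basis, so $\det(W^{-1}gW)$ and its inverse both lie in $\mathcal{O}_{K'}$, forcing $\det(W^{-1}gW)\in\mathcal{O}_{K'}^\times$. Therefore $\rho_{b'}(G)\subset GL(n,\mathcal{O}_{K'})$, and $K'$ is real whenever $G$ is real numerical, which is what we wanted.

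The main obstacle is exactly the point that Proposition~\ref{pro} and the preceding lemma are designed to handle: when $\mathcal{O}_K$ is not a principal ideal domain, the invariant $\mathcal{O}_K$-module $L_H$, though finitely generated, free and torsion-free, need not be free \emph{on $n$ generators} compatibly with spanning $K^n$ — a rank-$n$ projective module over a Dedekind domain is free iff its Steinitz class is trivial. The device of passing to a finite extension $K'$ in which the relevant ideal becomes principal (Stewart's theorem, as used in the lemma) kills the obstruction; all the remaining steps are bookkeeping with change of basis and the fact that a matrix whose inverse is also integral has unit determinant.
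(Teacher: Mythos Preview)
Your proof is correct and follows essentially the same route as the paper: apply the first lemma of Section~\ref{ng} to get the finitely generated $\mathcal{O}_K$-module $L_H$ of columns, invoke Proposition~\ref{pro} to pass to $K'$ where $L_{K'}$ is free on a basis $w_1,\dots,w_n$ of $K'^n$, and rewrite $G$ in that basis. You add two details the paper omits---the trivial $(\Leftarrow)$ direction and the check that determinants land in $\mathcal{O}_{K'}^\times$---both of which are welcome; your closing commentary has a small slip (you call $L_H$ ``free'' while explaining why it might not be free on $n$ generators---the paper's lemma overstates this, and the real point is that $L_H$ is projective of rank $n$ with possibly nontrivial Steinitz class), but this does not affect the argument.
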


\begin{proof}
Let $G$ be a numerical group. Suppose $G$ is a subgroup of $GL(n,K)\subset
GL(n, \mathbb{C})$ and $G_o \subset GL(n,\mathcal{O}_K))$ is a finite index
subgroup of $G$. Let $L_{K}$ be the $\mathcal{O}_K$-module (submodule of the
$\mathcal{O}$-module $K^{n}$) generated by linear combinations of the
columns of the elements of $G$ with coefficients in $\mathcal{O}_K$. Then $%
L_{K}$ is a finitely generated free module containing the canonical basis
and $G(L_{K})\subset L_{K}$. By Proposition \ref{pro} there exists a number
field $K^{\prime }\supset K$ (real if $K$ is real) such that the $\mathcal{O}%
_{K^{\prime }}$-module
\begin{equation*}
L_{K^{\prime }}\subset K^{\prime n}\subset \mathbb{C}^{n},
\end{equation*}
generated by linear combinations of the columns of the elements of $G$ with
coefficients in $\mathcal{O}_{K^{\prime }}$, is generated by a basis $\{
w_{1},...,w_{n}\}$ of $K^{\prime n}$; that is
\begin{equation*}
L_{K^{\prime }}=[w_{1},...,w_{n}]_{\mathcal{O}_{K^{\prime }}}\subset
K^{\prime n}\subset \mathbb{C}^{n}
\end{equation*}
Furthemore, $G(L_{K^{\prime }})\subset L_{K^{\prime }}$. Then writing $g\in G
$ in the basis $\{ w_{1},...,w_{n}\}$ a matrix with entries in $\mathcal{O}%
_{K^{\prime }}$ is obtained.
\end{proof}

As a consequence, we obtain the following criterion, that give us a useful
test to prove that a group of automorphism of a vector space is not
numerical.

\begin{corollary}
\label{ifpart}  If a group $G$ of automorphism of a vector space $V$ is
numerical then the trace of all the elements of $G$ are algebraic integers
of a number field.
\end{corollary}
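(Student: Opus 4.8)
The plan is to use Theorem \ref{tcanume} to reduce everything to a single concrete representation, and then invoke the classical fact that traces of integral matrices are algebraic integers. First I would observe that since $G$ is numerical, Theorem \ref{tcanume} provides a representation $\rho_b : G \to GL(n,\mathcal{O}_K)$ for some number field $K$, so that every $g \in G$ is represented by a matrix $\rho_b(g)$ all of whose entries lie in $\mathcal{O}_K$. The trace of $g$ (as an automorphism of $V$) is independent of the choice of basis, hence equals $\mathrm{tr}(\rho_b(g))$, which is a sum of $n$ entries of $\rho_b(g)$, all in $\mathcal{O}_K$. Since $\mathcal{O}_K$ is a ring (closed under addition), $\mathrm{tr}(\rho_b(g)) \in \mathcal{O}_K$, and elements of $\mathcal{O}_K$ are by definition algebraic integers. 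This is essentially the whole argument.

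The only point that needs a word of care is the basis-independence of the trace: if $(b)$ and $(b')$ are two bases related by $U \in GL(n,\mathbb{C})$, then $\rho_{b'}(g) = U^{-1}\rho_b(g) U$, so the two matrices are similar and share the same trace, as already noted in Section \ref{stf}. Thus "the trace of $g$" is well defined as an intrinsic invariant of the automorphism, and we are free to compute it in the integral representation supplied by Theorem \ref{tcanume}. I would also remark that the number field in question is exactly the $K$ (or $K'$, in the notation of the proof of Theorem \ref{tcanume}) over which the integral representation is defined, so the statement "of a number field" is witnessed concretely.

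There is no real obstacle here — the corollary is a direct packaging of Theorem \ref{tcanume} together with the trivial observation that $\mathcal{O}_K$ is additively closed. If anything, the only thing to be careful about is not to claim the converse (that integrality of traces implies numericality), which is false in general and holds only under the irreducibility/complete-reducibility hypothesis of Theorem \ref{ifandonlyif}; the corollary as stated is the "easy direction" and requires nothing beyond Theorem \ref{tcanume}. I would therefore keep the proof to two or three sentences: invoke Theorem \ref{tcanume}, note trace invariance under change of basis, and conclude that the trace is a sum of algebraic integers, hence an algebraic integer.
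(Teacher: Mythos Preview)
Your proposal is correct and matches the paper's approach exactly: the paper states this corollary as an immediate consequence of Theorem \ref{tcanume} without even writing out a separate proof, and your argument (pass to the integral representation, use basis-independence of the trace, and observe that a sum of elements of $\mathcal{O}_K$ lies in $\mathcal{O}_K$) is precisely the intended unpacking of that consequence.
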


The inverse of this result is true under the assumption that the group $G$
is irreducible or completely reducible.

\begin{theorem}
\label{ifandonlyif} Let $K$ be a number field and let $G$ be a irreducible
or completely reducible subgroup of $GL(n,K)\subset GL(n,\mathbb{C})$. Then $%
G$ is numerical if and only if the trace of all the elements of $G$ are
algebraic integers. Moreover, $G$ is real numerical if $K$ is real.
\end{theorem}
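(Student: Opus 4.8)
The plan is to prove the two implications separately. The forward direction ``numerical $\Rightarrow$ integer traces'' is exactly Corollary \ref{ifpart}, which holds for any group of automorphisms and requires no irreducibility hypothesis: if a finite index subgroup $G_0$ has entries in $\mathcal{O}_K$ and representatives $g_1,\dots,g_m$ of the cosets have entries in $K$, then every $g\in G$ satisfies $g^m{}' = $ (some product landing in $G_0$) so a suitable power, or rather the fact that $g$ permutes the lattice $L_{K'}$ constructed in Theorem \ref{tcanume}, shows $g$ is conjugate in $GL(n,\mathbb{C})$ to a matrix with entries in $\mathcal{O}_{K'}$; hence $\mathrm{tr}(g)\in\mathcal{O}_{K'}$ is an algebraic integer. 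So for this paper I would simply cite Corollary \ref{ifpart} for one direction and concentrate all the work on the converse.

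For the converse, assume $G\subset GL(n,K)$ is irreducible or completely reducible and that $\mathrm{tr}(g)$ is an algebraic integer for every $g\in G$. The key structural input is Theorem \ref{teorema4}: the trace form $T_G$ on $\mathbb{C}[G]$ is nondegenerate. The strategy is to produce a $G$-invariant $\mathcal{O}_K$-lattice in $K^n$ and then invoke Theorem \ref{tcanume}. First I would pass to a finite generating set: choose $g_1,\dots,g_N\in G$ that form a basis of $\mathbb{C}[G]$ as a $\mathbb{C}$-vector space (possible since $\mathbb{C}[G]$ is finite dimensional), and note each $g_i$ has entries in $K$, so after enlarging $K$ if necessary we may assume $\mathbb{C}[G]$ is defined over $K$, with $K[G]$ the $K$-span. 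Because $T_G$ is nondegenerate, for any $g\in G$ the ``coordinates'' of $g$ with respect to the basis $g_1,\dots,g_N$ are obtained by solving a linear system whose matrix is the Gram matrix $(\mathrm{tr}(g_ig_j))_{i,j}$ and whose right-hand side entries are the numbers $\mathrm{tr}(g g_j)$ — all of which are algebraic integers by hypothesis. Thus every element of $G$ lies in a fixed finitely generated $\mathcal{O}_{K}$-submodule of $M(n,K)$, namely $\frac{1}{\Delta}[g_1,\dots,g_N]_{\mathcal{O}_K}$ where $\Delta=\det(\mathrm{tr}(g_ig_j))$, because Cramer's rule expresses each coordinate of $g$ as an algebraic integer divided by $\Delta$.

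Now form $L_K=[\,\{\text{columns of all } g\in G\}\,]_{\mathcal{O}_K}\subset K^n$. By the previous paragraph the columns of every $g\in G$ all lie inside the single finitely generated module $\tfrac{1}{\Delta}[g_1,\dots,g_N]_{\mathcal{O}_K}$ acting on the columns of the identity, so $L_K$ is a finitely generated $\mathcal{O}_K$-submodule of $K^n$; since it contains the $N$ images of the standard basis columns under $g_1,\dots,g_N$ and these span $\mathbb{C}[G]\cdot e_j$, irreducibility/complete reducibility together with Theorem \ref{AWT} and Lemma \ref{lema7} guarantee that $L_K$ spans $K^n$ over $K$ (for the irreducible case $\mathbb{C}[G]=M(n,\mathbb{C})$ so the columns already span; for the completely reducible case one works block by block, or simply observes $L_K$ contains $e_1,\dots,e_n$ since $I=g_1\in G$). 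Also $G(L_K)\subset L_K$ by construction, exactly as in the first lemma of Section \ref{ng}. Apply Proposition \ref{pro}: there is a number field $K'\supset K$, real if $K$ is real, such that the corresponding module $L_{K'}$ is a free $\mathcal{O}_{K'}$-module with basis $w_1,\dots,w_n$ spanning $K'^n$, and still $G(L_{K'})\subset L_{K'}$. Writing each $g\in G$ in the basis $w_1,\dots,w_n$ yields a matrix with entries in $\mathcal{O}_{K'}$; determinants are units because $g^{-1}$ also preserves $L_{K'}$. Hence $\rho_b(G)\subset GL(n,\mathcal{O}_{K'})$ and by Theorem \ref{tcanume} the group $G$ is numerical, real numerical when $K$ is real.

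The main obstacle is the step showing that integrality of traces forces integrality of the coordinates of every group element, i.e. that one can divide by the fixed discriminant $\Delta$ and stay inside a fixed $\mathcal{O}_K$-module; this is where nondegeneracy of $T_G$ (Theorem \ref{teorema4}), and hence the irreducibility or complete reducibility of $G$, is genuinely used — the example with unipotent upper triangular matrices shows the conclusion fails without it. A secondary technical point is verifying that the resulting module $L_K$ actually has full rank $n$ over $K$ rather than spanning only a proper subspace; this is handled cleanly by the presence of the identity among the $g_i$, which puts $e_1,\dots,e_n$ into $L_K$, so that obstacle is minor. Everything else — finite generation, the passage to $K'$, and reading off that the matrices are integral — is routine given Proposition \ref{pro} and Theorem \ref{tcanume}.
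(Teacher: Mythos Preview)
Your proposal is correct and follows essentially the same route as the paper: cite Corollary~\ref{ifpart} for the forward implication, then for the converse use nondegeneracy of $T_G$ (Theorem~\ref{teorema4}) to trap $G$ inside a fixed finitely generated $\mathcal{O}_K$-module of matrices, pass to the column module $L_K$, and finish with Proposition~\ref{pro} and Theorem~\ref{tcanume}. The only cosmetic difference is that you bound the coordinates of $g$ via Cramer's rule and the discriminant $\Delta=\det(\mathrm{tr}(g_ig_j))$, whereas the paper phrases the same step through the dual basis $g_i^\ast$ and a rational integer $N$ clearing denominators; these are equivalent, and your implicit use of the Noetherianity of $\mathcal{O}_K$ replaces the paper's explicit finite-quotient argument for finite generation.
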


\begin{proof}
Suppose $G$ is an irreducible or completely reducible subgroup of $%
GL(n,K)\subset GL(n,\mathbb{C})$ and that the trace of all the elements of $G
$ are elements of $\mathcal{O}_{K}$. Recall that by Theorem \ref{teorema4},
the trace form $T$ is a nondegenerate bilinear form on $\mathbb{C}[G]$.

Let $\{ g_{1},...,g_{r}\} \in G$ be a basis of the $\mathbb{C}$-vector space
$\mathbb{C}[G]$:
\begin{equation*}
\lbrack g_{1},...,g_{r}]_{\mathbb{C}}=\mathbb{C}[G].
\end{equation*}%
Let $\{ g_{1}^{\ast },...,g_{r}^{\ast }\in \mathbb{C}[G]\} $ be the dual
basis of $\{ g_{1},...,g_{r}\}$ with respect to the bilinear form $T$. That
is $T(g_{i}^{\ast },g_{j})=\delta _{ij}$.

\textbf{Step 1: $\mathcal{O}_{K}[G]$.} Let $\mathcal{O}_{K}[G]$ be the $%
\mathcal{O}_{K}$-module generated by $G$ with coefficients in $\mathcal{O}%
_{K}$. We next prove that $\mathcal{O}_{K}[G]$ is finitely generated. The
module $[g_{1},...,g_{r}]_{\mathcal{O}_{K}}$ is a submodule of $\mathcal{O}%
_{K}[G]$. On the other hand, if  $g\in G$ we can write
\begin{equation}  \label{eg}
g=a_{1}g_{1}^{\ast }+...+a_{r}g_{r}^{\ast }
\end{equation}
where $a_{i}\in \mathbb{C}$. But
\begin{equation}  \label{tggj}
T(g,g_{j})=T(a_{1}g_{1}^{\ast }+...+a_{r}g_{r}^{\ast
},g_{j})=a_{j}=tr(gg_{j})\in \mathcal{O}_{K}
\end{equation}
by hypothesis. Therefore $\mathcal{O}_{K}[G]$ is a submodule of $%
[g_{1}^{\ast },...,g_{r}^{\ast }]_{\mathcal{O}_{K}}$:
\begin{equation*}
\lbrack g_{1},...,g_{r}]_{\mathcal{O}_{K}}\leq \mathcal{O}_{K}[G]\leq
\lbrack g_{1}^{\ast },...,g_{r}^{\ast }]_{\mathcal{O}_{K}}
\end{equation*}%
By applying (\ref{eg}) and (\ref{tggj}) to $g_i$, we have $%
g_{i}=\sum_{j=1}^{r}tr(g_{i}g_{j})g_{j}^{\ast }$. The matrix $%
(tr(g_{i}g_{j}))$ has entries in $\mathcal{O}_{K}$ ant its determinant is
different from zero because it corresponds to a change of basis in  $\mathbb{%
C}[G]$. Therefore $g_{i}^{\ast }=\sum_{j=1}^{r}b_{ij}g_{j}$ where $(b_{ij})$
is the inverse matrix of $(tr(g_{i}g_{j}))$. Hence $b_{ij}\in K$. But then
there exists a rational integer $N\geq 1$ such that $Nb_{ij}\in \mathcal{O}%
_{K}$ for all $b_{ij}$ and then $g_{i}^{\ast
}=\sum_{i=1}^{r}(Nb_{ij})(g_{j}/N)$. In other words
\begin{equation*}
C:=[g_{1},...,g_{r}]_{\mathcal{O}_{K}}\leq \mathcal{O}_{K}[G]\leq \lbrack
g_{1}^{\ast },...,g_{r}^{\ast }]_{\mathcal{O}_{K}}\leq \lbrack \frac{g_{1}}{N%
},...,\frac{g_{r}}{N}]_{\mathcal{O}_{K}}:=B
\end{equation*}
Being $\mathcal{O}_{K}$ the ring of integers of a number field, it is a
finitely generated abelian group. Let $\{ \zeta _{1},...,\zeta _{s}\}$ be a
generating system. Then a finitely generated module over $\mathcal{O}_{K}$,
as  $C$ (or $B$), is a finitely generated abelian group generated by $\zeta
_{i}g_{j}$ (resp. $\zeta _{i}\frac{g_{j}}{N}$). The quotient group $B/C$ is
finite because it is finitely generated and every element has finite order:
Observe that if
\begin{equation*}
b=\sum_{j=1}^{r}\varepsilon _{j}(g_{j}/N)\in B,\varepsilon _{j}\in \mathcal{O%
}_{K},
\end{equation*}%
then $Nb\in C$. Therefore the abelian quotient group $\mathcal{O}_{K}[G]/C$
is finite.

Let $[h_{1}],...,[h_{t}]$ be the elements of $\mathcal{O}_{K}[G]/C$. Then if
$a\in\, \mathcal{O}_{K}[G]$, the element $[a]$ of $\mathcal{O}_{K}[G]/C$ is
written as $[a]=m_{1}[h_{1}]+...+m_{t}[h_{t}]$ with rational integer
coefficients. Therefore  $a=m_{1}h_{1}+...+m_{t}h_{t}+h$ where $h\in C$.
Then $a=m_{1}h_{1}+...+m_{t}h_{t}+\alpha _{1}g_{1}+...+\alpha _{r}g_{r}$, $%
\alpha _{i}\in \mathcal{O}_{K}$. Thus
\begin{equation*}
\mathcal{O}_{K}[G]=[h_{1},...,h_{t},g_{1},...,g_{r}]_{\mathcal{O}_{K}}
\end{equation*}%
is finitely generated. Let's write for short $\mathcal{O}_{K}[G]=[\gamma
_{1},...,\gamma _{s}]_{\mathcal{O}_{K}}$ where $\gamma _{j}$ is a linear
combination of matrices of $G$ with coefficients in $\mathcal{O}_{K}$.

\textbf{Step 2: $L_{G}$.} The module $L_{G}^{i}$ generated over $\mathcal{O}%
_{K}$ by the $i$-th column of the matrices of $G$ is finitely generated over
$\mathcal{O}_{K}$ by the $i$-th columns of the matrices $\gamma
_{1},...,\gamma _{s}$ which are a linear combination with coefficients in $%
\mathcal{O}_{K}$ of the  $i$-th columns $g_{1}^{i},...,g_{r}^{i}$ of the
matrices $g_{1},...,g_{r} $. Therefore the module $L_{G}^{i}$ is generated
over $\mathcal{O}_{K}$ by the columns $g_{1}^{i},...,g_{r}^{i}$.

An element of the module $L_{G}$ generated over $\mathcal{O}_{K}$ by the
columns of the matrices of $G$ is a linear combination with coefficients in $%
\mathcal{O}_{K}$ of elements in $L_{G}^{1},...,L_{G}^{n}$. Therefore $L_{G}$
is finitely generated over $\mathcal{O}_{K}$ by the columns of $%
g_{1},...,g_{r}$. The module $L_{G}$ contains the canonical basis (because $%
I\!d\in G$) and $G(L_{G})\subset L_{G}$. We write $L_{G}=[\lambda
_{1},...,\lambda _{u}]_{\mathcal{O}_{K}}$ where the vectors $\lambda _{i}$
are in $K^{n}$, since they are columns of $G$ which is a subgroup of $GL(n,K)
$.

\textbf{Step 3: $\rho_b(G)\subset GL(n,\mathcal{O}_{K^{\prime }})$}

By Proposition \ref{pro} there exists a number field $K^{\prime }$ (real if $%
K$ is real)containing $K$ such that the $\mathcal{O}_{K^{\prime }}$%
-submodule $L_{K^{\prime }}=[\lambda _{1},...,\lambda _{u}]_{\mathcal{O}%
_{K^{\prime }}}$ of $K^{\prime n}$ is generated by a basis $(b)=\{
w_{1},...,w_{n}\}$ of $K^{\prime n}$ and $G(L_{K^{\prime }})\subset
L_{K^{\prime }}$. The system $\{ w_{1},...,w_{n}\}$ generates over $\mathbb{C%
}$ the space $\mathbb{C}^{n}$ because $K^{n}$ contains the canonical basis.
Therefore $(b)$ is a basis of $\mathbb{C}^{n}$. The matrices of the
representation $\rho_b(G)$ of $G$ in the basis $(b)$, have its entries in $%
\mathcal{O}_{K^{\prime }}$ because $G(L_{K^{\prime }})\subset L_{K^{\prime }}
$. Then Theorem \ref{tcanume} implies that $G$ is a numerical group, which
is real numerical if $K$ is real.
\end{proof}

\section{Some numerical subgroups of $SL(2,\mathbb{C})$}\label{sej}

The study of numerical subgroups of $SL(2,\mathbb{C})$ is interesting in order to find discrete groups of isometries of the hyperbolic space $\mathbb{H}^3$. Recall that the group $PSL(2,\mathbb{C})$ is isomorphic to  $Iso^+(\mathbb{H}^3)$, the  group of  orientation preserving isometries of the hyperbolic space $\mathbb{H}^3$ in the half space Poincar\'{e} model. Here we analize a family of groups which contain groups  related to some important ones as the Modular group, Hecke groups and Picard and Bianchi groups.

Consider the matrix group $\Pi (\lambda)$, $\lambda \in \mathbb{C}$, generated by the two matrices
\[
A=\left(
    \begin{array}{cc}
      0 & -1 \\
    1 & 0 \\
    \end{array}
  \right), \quad
 B=\left(
    \begin{array}{cc}
      1 & \lambda \\
    0 & 1 \\
    \end{array}
  \right).
\]  Observe that $\Pi (\lambda) \subset SL(2,\mathbb{Z}(\lambda ))\subset SL(2,\mathbb{C})$, where $\mathbb{Z}(\lambda)$ is the polynomial ring over $\mathbb{Z}$ with $\lambda$ as indeterminate.

\begin{proposition}
  The group $\Pi (\lambda)$ is a numerical group if and only if $\lambda$ is an algebraic integer.
\end{proposition}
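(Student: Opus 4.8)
The plan is to apply Theorem \ref{ifandonlyif} once we know that $\Pi(\lambda)$ is irreducible (or completely reducible). First I would check irreducibility: the subspaces of $\mathbb{C}^2$ invariant under $A$ and $B$ are easy to enumerate. A one-dimensional invariant subspace would be a common eigenvector of $A$ and $B$. The matrix $B$ fixes only the line spanned by $(1,0)^t$ (assuming $\lambda\neq 0$), and $A$ sends $(1,0)^t$ to $(0,1)^t$, so there is no common eigenvector; hence $\Pi(\lambda)$ is irreducible for $\lambda\neq 0$. For $\lambda=0$ the group is generated by $A$ alone, which is finite of order $4$, hence completely reducible, and $0$ is an algebraic integer, so that degenerate case is handled separately and trivially. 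Thus for $\lambda\neq 0$ we are in the setting of Theorem \ref{ifandonlyif} with $K=\mathbb{Q}(\lambda)$ (when $\lambda$ is algebraic) and the statement reduces to: \emph{the traces of all elements of $\Pi(\lambda)$ are algebraic integers if and only if $\lambda$ is an algebraic integer.}

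For the ``if'' direction: if $\lambda$ is an algebraic integer, then $A$ and $B$ already have entries in $\mathcal{O}_K$ where $K=\mathbb{Q}(\lambda)$, so $\Pi(\lambda)\subset SL(2,\mathcal{O}_K)$ and the group is numerical directly from the definition (one may even take $G_0=G$). This direction does not even need Theorem \ref{ifandonlyif}. For the ``only if'' direction: suppose $\Pi(\lambda)$ is numerical. By Corollary \ref{ifpart}, every element of $\Pi(\lambda)$ has trace an algebraic integer. Now $AB=\left(\begin{array}{cc}0&-1\\1&\lambda\end{array}\right)$ has trace $\lambda$, so $\lambda$ is an algebraic integer. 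This is the crux, and it is immediate once one exhibits the right word in the generators.

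The only genuine subtlety is bookkeeping about the field $K$: a priori $\Pi(\lambda)$ being numerical only gives us \emph{some} representation over a number field, whereas the obvious realization sits inside $SL(2,\mathbb{Z}(\lambda))$. But Corollary \ref{ifpart} is stated representation-independently — traces are conjugation invariants — so the trace of $AB$ computed in the given matrices equals $\lambda$ regardless of which representation realizes the numerical structure, and algebraic integrality of $\lambda$ follows unconditionally. I would also remark that when $\lambda$ is transcendental the group cannot be numerical, since then $\mathbb{Z}(\lambda)$ is not contained in any number field and $\lambda=\operatorname{tr}(AB)$ is not even algebraic, so Corollary \ref{ifpart} rules it out immediately; this is the ``if and only if'' read correctly as ``$\lambda$ algebraic integer'' versus ``$\lambda$ not an algebraic integer (including transcendental).'' The main obstacle, such as it is, is simply making sure the degenerate case $\lambda=0$ and the transcendental case are explicitly addressed; the algebraic heart of the argument is the one-line trace computation $\operatorname{tr}(AB)=\lambda$ combined with Corollary \ref{ifpart}.
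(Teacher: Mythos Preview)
Your proof is correct and essentially parallel to the paper's, but the two diverge on the ``if'' direction. When $\lambda$ is an algebraic integer you observe directly that the generators already lie in $SL(2,\mathcal{O}_K)$ with $K=\mathbb{Q}(\lambda)$, so the group is numerical by definition with $G_0=G$; the paper instead invokes the classical Fricke--Vogt--Magnus result that in a two-generator subgroup of $SL(2,\mathbb{C})$ every trace is an integer polynomial in $t_A$, $t_B$, $t_{AB}$, computes these three numbers, and then applies Theorem \ref{ifandonlyif} in both directions simultaneously. Your route is more elementary and self-contained, avoiding the external trace-polynomial lemma entirely; the paper's route is more uniform, reducing both implications to a single trace criterion. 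You are also more careful with the hypotheses: Theorem \ref{ifandonlyif} assumes $G\subset GL(n,K)$ for a number field $K$, which fails when $\lambda$ is transcendental, and you correctly handle that case separately via Corollary \ref{ifpart}, whereas the paper glosses over this point. Finally, you verify irreducibility explicitly (and treat $\lambda=0$ as a degenerate completely reducible case), while the paper simply asserts it.
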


\begin{proof}
The group $\Pi (\lambda)$ is an irreducible subgroup of $SL(2,\mathbb{Z}(\lambda ))$. Then, according to Theorem \ref{ifandonlyif} it is numerical if and only if the trace of all the elements of $\Pi (\lambda)$ are algebraic integers.
  In $SL(2, \mathbb{C})$ there exists the following useful result (\cite{Vo}, \cite{Ma}, \cite{CS}, \cite{GAMA1992} \cite{HLM95}):
Sea $H\subset SL(2,\mathbb{C})$. The trace $t_h$ of every element $h\in H$
can be written as a polynomial with rational coefficients in the variables
\begin{equation*}
\begin{array}{ll}
t_{h_i}, & 1\leq i\leq n \\
t_{h_ih_j}, & 1\leq i<j\leq n \\
t_{h_ih_jh_k}, & 1\leq i<j<k\leq n%
\end{array}
\end{equation*}
where $\{h_1,...,h_n\}$ is any set of generators of $H$.

The group $\Pi (\lambda)$ is generated
by two elements $\{A,B\}$, then for $h\in \Pi (\lambda)$,  $t_h$ can be written as a
polynomial with integer coefficients in the variables
$\{t_{A},t_{B},t_{AB}\}$.
\[
t_A = 0,\quad t_B= 2,\quad t_{AB}= \lambda
\]
 Therefore $t_h$ is an algebraic integer if and only if $\lambda$ is an algebraic integer.
\end{proof}

For real $\lambda$ the group $\Pi (\lambda)$ projects on $PSL(2,\mathbb{R})$ to the Hecke group $H(\lambda)$. Hecke groups were introduced by Hecke \cite{Hecke36}. Recall that $PSL(2,\mathbb{R})$ is the group of  orientation preserving isometries of the hyperbolic plane $\mathbb{H}^2$ in the upper half-plane model, acting as M\"{o}ebius transformations.
\[
\pm \left(
      \begin{array}{cc}
        a & b \\
        c & d \\
      \end{array}
    \right)\in PSL(2,\mathbb{R}) \quad \longrightarrow \quad z\to \frac{az+b}{cz+d}
\]

The Hecke group $H(\lambda)$ is the Fuchsian group generated by the two M\"{o}ebius transformations
\[
A(z)= -\frac{1}{z} \quad \text{and}\quad B(z)=z+\lambda
\]
$H(1)$ is the Modular group.
When $\lambda > 2$, the Hecke group $H(\lambda)$ is of second kind; that is, the fundamental domain in the hyperbolic plane has infinite volume.  Hecke proved that $H(\lambda)$ is discrete if and only if $\lambda \geq 2$ or $\lambda = \lambda_q=2\cos (\pi /q)$, $q\in \mathbb{N}$, $q\geq 3$. Then, here we have examples of discrete subgroups of $SL(2,\mathbb{R})$ which are not numerical groups: $\Pi (\lambda)$ with $\lambda > 2$ and $\lambda$ no an algebraic integer. The action on the hyperbolic plane $\mathbb{H}^2$ of the projection of these groups on $PSL(2,\mathbb{R})$ produces a hyperbolic orbifold surface with infinite volume.

The numerical group  $\Pi (\sqrt{-d})$ projects on $PSL(2,\mathbb{C})$ to a subgroup of the Bianchi group $PSL(2,\mathcal{O}_d)$, where $\mathcal{O}_d$ denote the ring of integers in $\mathbb{Q}(\sqrt{-d})$. In particular, the numerical group $\Pi (i)$ projects on $PSL(2,\mathbb{C})$ to a subgroup of the Picard modular group $PSL(2,\mathcal{O}_1)$. Those groups have been widely studied, see for instance references in \cite{MR1991}.

\end{document}